\newtheorem{prop}{Proposition}
\newtheorem{priteo}{Theorem}
\newenvironment{proof}[1][Proof]{\textbf{#1.} }{\ \rule{0.5em}{0.5em}}
\journal{Applied Mathematical Modelling}
\begin{document}

\begin{frontmatter}

%% Title, authors and addresses

%% use the tnoteref command within \title for footnotes;
%% use the tnotetext command for theassociated footnote;
%% use the fnref command within \author or \address for footnotes;
%% use the fntext command for theassociated footnote;
%% use the corref command within \author for corresponding author footnotes;
%% use the cortext command for theassociated footnote;
%% use the ead command for the email address,
%% and the form \ead[url] for the home page:
%% \title{Title\tnoteref{label1}}
%% \tnotetext[label1]{}
%% \author{Name\corref{cor1}\fnref{label2}}
%% \ead{email address}
%% \ead[url]{home page}
%% \fntext[label2]{}
%% \cortext[cor1]{}
%% \address{Address\fnref{label3}}
%% \fntext[label3]{}

\title{Oscillation patterns in tori of modified FHN neurons}

%% use optional labels to link authors explicitly to addresses:
%% \author[label1,label2]{}
%% \address[label1]{}
%% \address[label2]{}

\author{Adrian C. Murza}

\address{Institut de F\'{\i}sica Interdisciplin\`{a}ria i Sistemes Complexos (CSIC-UIB). \\ Campus Universitat de les Illes Balears, Carretera de Valldemossa km. 7.5,\\ 07122 Palma de Mallorca, Espa\~na}

\begin{abstract}
%% Text of abstract
We analyze the dynamics of a network of electrically coupled, modified FitzHugh-Nagumo (FHN) oscillators. The network building-block architecture is a bidimensional squared array shaped as a torus, with unidirectional nearest neighbor coupling in both directions. Linear approximation about the origin of a single torus, reveals that the array is able to oscillate via a Hopf bifurcation, controlled by the interneuronal coupling constants. Group theoretic analysis of the dynamics of one torus leads to discrete rotating waves moving diagonally in the squared array under the influence of the direct product group $\mathbb{Z}_N\times\mathbb{Z}_N\times\mathbb{Z}_2\times\mathbb{S}^1.$ Then, we studied the existence multifrequency patterns of oscillations, in networks formed by two coupled tori. We showed that when acting on the traveling waves, this group leaves them unchanged, while when it acts on the in-phase oscillations, they are shifted in time by $\phi.$ We therefore proved the possibility of a pattern of oscillations in which one torus produces traveling waves of constant phase shift, while the second torus shows synchronous in-phase oscillations, at $N-$ times the frequency shown by the traveling waves.
\end{abstract}

\begin{keyword}
%% keywords here, in the form: keyword \sep keyword
group-theoretic \sep coupled-tori \sep traveling-waves \sep in-phase oscillations \sep Hopf bifurcation.
%% PACS codes here, in the form: \PACS code \sep code
\MSC 37C80 \sep 37G40 \sep 57T05 \sep 70G65
%% or \MSC[2008] code \sep code (2000 is the default)

\end{keyword}

\end{frontmatter}

%% \linenumbers

%% main text
%\section{}
%\label{}

\section{Introduction}
\label{introd}

The phenomenon of multifrequency oscillatory patterns has been initially described in electrical systems, where one or many variables happened to oscillate with two or three times the frequency of the other oscillators when there was no obvious symmetry reason to do so \cite{AC99} (and references therein). This phenomenon received a solid group theoretic treatment by the works of Golubitsky and Stewart \cite{GS02},\cite{GS04},\cite{GS04b},\cite{GS06},\cite{GS86},\cite{GSB98} as well as Armbruster and Chossat in \cite{AC99} and it has also been confirmed experimentally in \cite{SS07},\cite{LPV03},\cite{LPV07} and \cite{Phys}. Concretely, Palacios et al. in \cite{LPV03},\cite{LPV07} proved the existence of such multifrequency patterns in electrical systems of Duffing oscillators.

Duffing oscillators where the first dynamical systems used to model the brain activity and in particular, to reproduce the electroencephalograms \cite{Z}. However, the first mathematical model for the electrical signaling or firing for individual as well as coupled neurons, developed in direct relationship with the underlying neuronal physiology, was the Hudgkin-Huxley model \cite{HH} and its simplified version, the FHN model \cite{FN1}, \cite{FN2}. Neurophysiological informations on brain processing indicate that neural networks must function in a relatively narrow frequency range \cite{BBK98}. This contrasts with the observation that brain uses a wide frequency spectrum including several oscillation ranges (delta, theta, alpha, beta, gamma), which gives poor efficiencies of signal coding by different frequencies, because of low signal resolution in this frequency range \cite{BBKS00}. It has been proposed that hierarchical feature integration can be accomplished using multifrequency quasiperiodic oscillations \cite{BBK98}. For a two-level system, the authors propose that synchronization of high-frequency oscillations is used to integrate simple features \cite{BBK98}, \cite{martinerie}, whilst for more complex features out-of-phase oscillations of constant phase shift were proposed \cite{coexistence} and \cite{learning}. Multifrequency oscillatory patterns are in fact widely described in neural systems. Many periodic activities are originated by groups of neurons forming central pattern generators \cite{GS02}, \cite{M02}, \cite{KE88}, \cite{KE90},\cite{RTWE98}, \cite{CS94}. In addition, coexistence between in-phase oscillations and traveling waves of constant phase shift have been reported in the nervous system \cite{coexistence} and references therein.

Symmetry is a constantly present feature in nervous system architecture. The already mentioned central pattern generators, visual cortex neuronal circuits in mammals \cite{Cat}, \cite{Cortex}  or plasmodial slime mold \cite{Phys} are just a few examples in which symmetry groups act on the dynamical systems represented by biological oscillators \cite{VisGol}, \cite{GSB98} and \cite{CS94}.
Consider for instance the following system of differential equations, $\displaystyle{\frac{d\mathbf{x}}{dt}=\mathbf{f(x)}}$ where $\mathbf{f}:\mathbb{R}^{n}\rightarrow\mathbb{R}^{n}$ is a smooth vector field. Let $\mathbf{\Gamma}$ be a group that acts in $\mathbb{R}^{n}.$ Let us denote the action of $\mathbf{\Gamma}$ on the space of vector fields on $\mathbb{R}^{n}$ by $\mathbf{\Gamma}$ also. In general we have the result that if the action of a symmetry group $\mathbf{\Gamma}$ on an ordinary differential equation $\displaystyle{\frac{d\mathbf{x}}{dt}=\mathbf{f}(\mathbf{x},\mu)}$, equivariant under the action of the symmetry group, where $\mathbf{x}\in\mathbb{R}^{n},\mu\in\mathbb{R}$ and $\mathbf{f}$ is smooth, then $\gamma \mathbf{f}(\mathbf{x},\mu)=\mathbf{f}(\gamma\mathbf{x},\mu),\forall\gamma\in\mathbf{\Gamma}.$ From the uniqueness of the solutions of initial value problems, it follows that either $x(t)$ and $\gamma\ x(t)$ are disjoint trajectories resulting from a new periodic solution or they differ from each other by a phase shift. This leads to the following conclusion: when a system inherits a symmetry and when it undergoes Hopf bifurcation, resulting periodic solutions also inherit certain symmetry properties \cite{H06}, \cite{GS04b}. We study in this paper, the dynamics of a symmetric network of coupled neurons, that is, a network on which act certain symmetry groups. We focus our analysis on the multifrequency patterns of oscillations which are dependent on the network architecture.

In our approach, the neurons are of FHN type. For a single neuron, the FHN model \cite{M02} is modified according to \cite{SS07}; the variables assumed in this model are the membrane potential $x$ and (a surrogate for) the ionic current $y.$ The state of the neuron is thus specified by a point $(x,y)\in \mathbb{R}^{2}$, and the internal dynamics is
\begin{equation}\label{FHN1}
    \begin{array}{l}
        \dot{x}=ax-x^3-y\\
        \dot{y}=bx-cy
    \end{array}
\end{equation}
where $a, b$ are parameters, and $0<a<1, b>0, c>0$ and $b>a.$ In a two-cell network, the dynamics of one cell is modified by coupling influences from the other cell. This can be imagined, as in \cite{GS06}, by adding an applied current to the $\dot{x}$ equation, this being a function of the state of the other cell. Thus $x_{1} = (v_{1}, w_{1}), x_{2} = (v_{2}, w_{2})$, and $g(u,v)=(ax_{1}-x_{1}^3-y_{1}+k(x_{1}-x_{2}),bx_{1}-cy_{1})$, where $k$ represents coupling strength.

The paper is organized as follows. In section \ref{net} we describe the building block structure of our network: a three-dimensional lattice shaped as a torus, and identify the symmetry group acting on the coupled differential systems located at the nodes of the lattice. In section \ref{linear}, we start the analysis of the network formed by one torus. We analyze the dynamics of the linear approximation of system (\ref{nFHN_2d}) about the origin, by means of the explicit expressions for eigenvalues and eigenvectors, and identify the existence of limit cycles arose from the Hopf bifurcation which depends on the interneuronal couplings. In section \ref{onetorus_theor} we continue to study the dynamics of one torus, this time from a group theoretic angle. We apply the Theorem \ref{equiv} from \cite{GS04}, to identify the two-dimensional isotropy subgroups of the symmetry group acting on the global system \ref{nFHN_2d}, and prove the existence of $N-$ distinct discrete rotating waves which are diagonally translated in the lattice. In section \ref{twocoupled}, we carry our analysis of the dynamics of two coupled tori. We show that certain groups of symmetries can be associated with periodic patterns where an entire array oscillates at different frequencies, and apply this argument to describe the multifrequency patterns of oscillations in two coupled tori.

\section{Network structure}\label{net}

Palacios et al. \cite{PCLR05} as well as Longhini et al. \cite{LPV07} experimentally proved the existence of multifrequencies in unidimensional rings of overdamped Duffing oscillators as well as Van der Pol oscillators \cite{LPV03}. Since our main aim here is studying the patterns of multifrequencies, it appears natural  not only to give a detailed description of our network structure, but also to briefly compare it with the cited network architectures. In \cite{PCLR05} and \cite{LPV03}, it has been analyzed the dynamics of two unidimensional and oriented rings of cells with nearest-neighbor intrarray coupling and all-to-all interarray coupling. In \cite{LPV07} they considered arrays formed by $N$ oriented rings of cells, the network architecture respecting unidirectional coupling in each array and also between adjacent cells in the $N$ arrays.

In particular, the general network presented in \cite{LPV07}, is such that the first row (or column) is not connected to the last row (or column). Their network is constructed from a rectangular array after the top and the bottom edges are identified with each other, such that the final structure respects the cylindrical topology. They used opposite directions of the intraarray couplings.

In this work, we start to build our network, by considering a squared lattice, the nodes of which represent FHN neurons, $N$ in each direction. We first fold the squared array into the shape of a cylinder, followed by closing up the ends of the cylinder. Thus, the building block of our network is a two-dimensional array wrapped in the shape of an empty torus. A similar structure served Miller et al. \cite{MDHM00} to model an array of Van der Pol oscillators as the cartesian product of three rings. Our interarray couplings have same directions; we do not use cross couplings.

As a first approximation we assume that all neurons are identical. Each neuron of the network is coupled to the two forehead neurons (of its four nearest neighbors), one in each direction, and we respect the same direction of interring couplings. For example, neuron located at $(\alpha,\beta)$ in the array, is coupled to neuron $(\alpha+1,\beta)$ along the $\alpha$ direction, and with neuron $(\alpha,\beta+1)$ along the $\beta$ direction (see Figure \ref{fig1}). The dynamics of the neuron at the location $\alpha,\beta$ is described by the variables $x_{\alpha,\beta}, y_{\alpha,\beta}$
\begin{equation}\label{nFHN_2d}
    \begin{array}{l}
        \dot{x}_{\alpha,\beta}=ax_{\alpha,\beta}-x_{\alpha,\beta}^3-y_{\alpha,\beta}+\gamma \kappa(x_{\alpha+1,\beta},x_{\alpha,\beta})+\delta \mu(x_{\alpha,\beta+1},x_{\alpha,\beta})\\
        \dot{y}_{\alpha,\beta}=bx_{\alpha,\beta}-cy_{\alpha,\beta}
    \end{array}
\end{equation}
where %$\kappa(x_{\alpha+1,\beta},x_{\alpha,\beta})$ and $\mu(x_{\alpha,\beta+1},x_{\alpha,\beta})=-x_{\alpha,\beta+1}+x_{\alpha,\beta}$
\begin{equation}\label{coupling}
    \begin{array}{l}
        \kappa(x_{\alpha+1,\beta},x_{\alpha,\beta})=-x_{\alpha+1,\beta}+x_{\alpha,\beta},\\
        \mu(x_{\alpha,\beta+1},x_{\alpha,\beta})=-x_{\alpha,\beta+1}+x_{\alpha,\beta},
    \end{array}
\end{equation}
are the interneuronal intratorus coupling functions,$\gamma$ is the coupling constant in $\alpha$ direction, while $\delta$ is the coupling constant in $\beta$ direction.

\begin{figure}[ht]
\begin{center}
\includegraphics[width=13.8875cm,height=5.4cm]{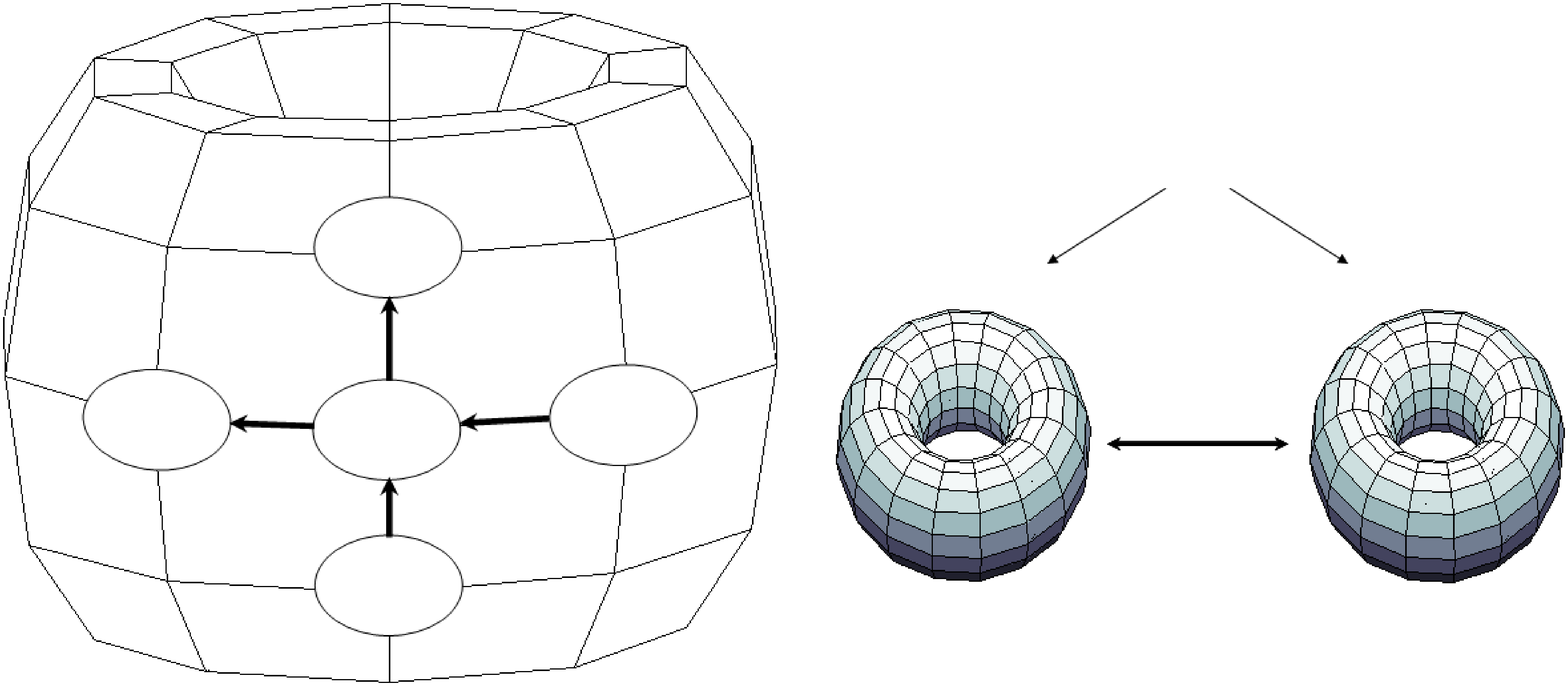}
\caption{\label{fig1} (a) Schematic diagram of the bidimensional array on the torus surface: along the $\alpha$ direction (the coupling constant is $\gamma$), neuron $(\alpha+1,\beta)$ is unidirectionally coupled with neuron $(\alpha,\beta),$ neuron $(\alpha,\beta)$ is coupled with neuron $(\alpha-1,\beta),$ etc. Along the $\beta$ direction (the coupling constant is $\delta$), neuron $(\alpha,\beta+1)$ is unidirectionally coupled with neuron $(\alpha,\beta),$ neuron $(\alpha,\beta)$ is coupled with neuron $(\alpha,\beta-1),$ etc.; (b) Schematic diagram of a $2$-tori network obtained by bidirectional inter-tori coupling (the coupling constant is $\varepsilon$).}
\end{center}
\begin{picture}(0,0)\vspace{0.5cm}
\put(0,120){\large{$\mathbf{(a)}$}}
\put(28,180){\scriptsize{$\mathbf{\alpha+1,\beta}$}}
\put(140,180){\scriptsize{$\mathbf{\alpha-1,\beta}$}}
\put(90,178){\scriptsize{$\mathbf{\alpha,\beta}$}}
\put(85,214){\scriptsize{$\mathbf{\alpha,\beta+1}$}}
\put(85,145){\scriptsize{$\mathbf{\alpha,\beta-1}$}}
\put(68,186){\large{$\mathbf{\gamma}$}}
\put(122,186){\large{$\mathbf{\gamma}$}}
\put(90,159){\large{$\mathbf{\delta}$}}
\put(90,193){\large{$\mathbf{\delta}$}}
\put(195,120){\large{$\mathbf{(b)}$}}
\put(205,210){\scriptsize{torus\hspace{0.1cm}\#1}}
\put(347,210){\scriptsize{torus\hspace{0.1cm}\#2}}
\put(269,167){\scriptsize{bidirectional}}
\put(274,157){\scriptsize{inter-tori}}
\put(276,147){\scriptsize{coupling}}
\put(288,180){\large{$\mathbf{\mathbf{\varepsilon}}$}}
\put(250,255){\scriptsize{unidirectional intra-ring}}
\put(247,245){\scriptsize{nearest-neighbor coupling}}
\put(263,235){\scriptsize{on torus surface}}
\end{picture}\vspace{-0.7cm}
\label{first_figure}
\end{figure}

In the following, we will try to identify and describe the symmetry group of one torus.
Let us denote by $\mathbb{Z}_{N}$, the group of cyclic permutations of $N$ neurons in each direction on the torus surface. In this torus-like neuronal network with oriented nearest neighbor coupling, the symmetry group is not $\mathbb{Z}_{N}\times\mathbb{Z}_{N}$ but $\mathbb{Z}_{N}\times\mathbb{Z}_{N}\times\mathbb{Z}_{2}$, where the additional symmetry is introduced by the property $(x,y)\mapsto(-x,-y).$ More specifically, the action of this group on $\mathbb{R}^{N^2}\oplus\mathbb{R}^{N^2}$, with coordinates $(x,y)$ is:
\begin{equation*}\label{torus_dihedral}
    \begin{array}{l}
        \sigma (x_{1,\beta},\ldots,x_{N,\beta};y_{1,\beta},\ldots,y_{N,\beta})=(x_{2,\beta},\ldots,x_{N,\beta},x_{1,\beta};y_{2,\beta},\ldots,y_{N,\beta},y_{1,\beta})\\
        \rho (x_{\alpha,1},\ldots,x_{\alpha,N};y_{\alpha,1},\ldots,y_{\alpha,N})=(x_{\alpha,2},\ldots,x_{\alpha,N},x_{\alpha,1};y_{\alpha,2},\ldots,y_{\alpha,N},y_{\alpha,1})\\
        \varpi(x_{1,\beta},\ldots,x_{N,\beta};y_{1,\beta},\ldots,y_{N,\beta};x_{\alpha,1},\ldots,x_{\alpha,N};y_{\alpha,1},\ldots,y_{\alpha,N})=\\
        =(-x_{1,\beta},\ldots,-x_{N,\beta};-y_{1,\beta},\ldots,-y_{N,\beta};-x_{\alpha,1},\ldots,-x_{\alpha,N};-y_{\alpha,1},\ldots,-y_{\alpha,N})
    \end{array}
\end{equation*}
where $\mathbb{Z}_{N}^{\alpha}=\langle\sigma\rangle,$ $\mathbb{Z}_{N}^{\beta}=\langle\rho\rangle,$ and $\mathbb{Z}_{2}=\langle\varpi\rangle.$ Let $\mathbf{\Gamma}=\mathbb{Z}_{N}^{\alpha}\times\mathbb{Z}_{N}^{\beta}\times\mathbb{Z}_{2}.$

In a network with cylindrical topology, coupling functions $\kappa\left(x_{\alpha+1,\beta},x_{\alpha,\beta}\right)$ and $\mu\left(x_{\alpha,\beta+1},x_{\alpha,\beta}\right)$ fail to be $\mathbf{\Gamma}$ equivariant for the reason that neurons in the first row (or column) are not treated as neighbors of neurons in the last row (or column) and vice-versa. However; if the two free ends of the cylinder are connected to eachother, and thereby declaring first and last rows (or columns) as adjacent, then the resulting $\kappa(x_{\alpha+1,\beta},x_{\alpha,\beta})$ and $\mu(x_{\alpha,\beta+1},x_{\alpha,\beta})$ turn out to be $\mathbf{\Gamma}$ equivariant.

In our attempt to study the multifrequency patterns in two coupled tori, we would like to analyze first, the dynamics of a single torus. The main question we try to answer to in the next section is the following: how does the coupling functions/parameters affect the network dynamics? Linearization about the origin leads us to the following.

\begin{figure}[ht]
\centering
\begin{center}
\includegraphics[width=11.039cm,height=3.3766cm]{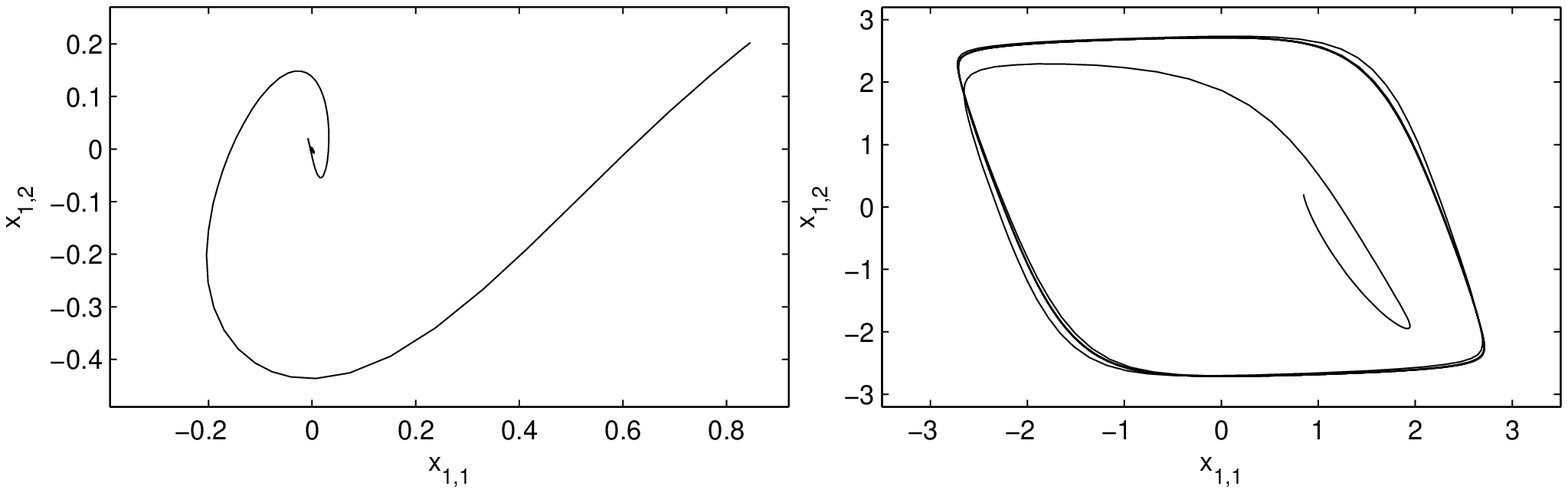}
\caption{\label{fig2_un_toro} Plot of $x_{1,1}$ versus $x_{1,2}$ from the integration of system (\ref{nFHN_2d}) corresponding to a torus of $3\times 3$ neurons: (a) low coupling case, $\gamma=0.1$ and $\delta=0.1.$ The trajectory describes an open curve from the initial conditions to the stable equilibrium point; (b) high coupling case, $\gamma=2$ and $\delta=2.$ The trajectory approaches a closed curve indicating the closeness of a periodic solution. The rest of the parameters and initial conditions for all variables in both cases are $a=0.01$, $b=c=0.9$, $x^{0}=(0.8462,0.2026,0.8381,0.6813,0.8318, 0.7095,0.3046,0.1934,0.3028), y^{0}=(0.5252,0.6721,0.0196,0.3795,0.5028,0.4289,0.1897,0.6822,0.5417).$}
\begin{picture}(0,0)\vspace{0.5cm}
\put(-145,120){\large{$\mathbf{(a)}$}}
\put(0,120){\large{$\mathbf{(b)}$}}
\put(150,180){\includegraphics[width=1.6825cm,height=1.2225cm]{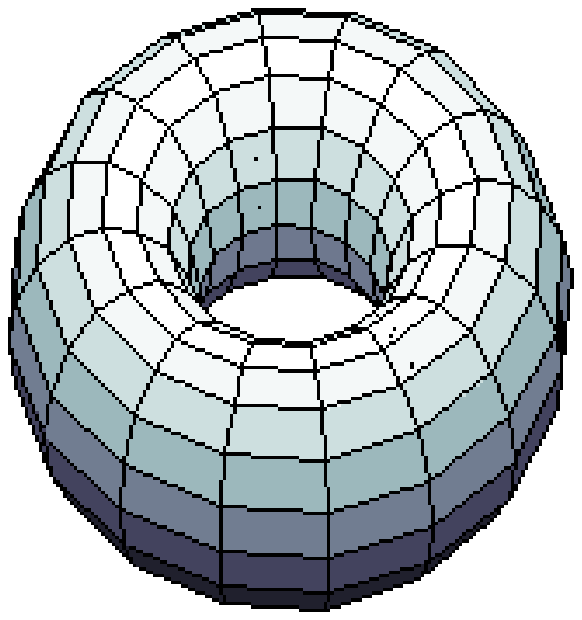}}
\put(165,170){\scriptsize{single}}
\put(166,160){\scriptsize{torus}}
\put(162,150){\scriptsize{array of}}
\put(167,140){\scriptsize{$3\times 3$}}
\put(163,130){\scriptsize{neurons}}
\end{picture}\vspace{-1.3cm}
\end{center}
\label{second_figure}
\end{figure}

\section{Linear analysis}\label{linear}

To analyze bifurcation of the system (\ref{nFHN_2d}) we need to understand the eigenvalues and eigenvectors of the linear approximation about the origin. Origin is a singular point of (\ref{nFHN_2d}) and linearization leads to the $N\times N$ block circulant matrix $M$

\begin{equation*}\label{matrixM}
    M =
        \begin{bmatrix}
            A&B&\mathbf{0}&\ldots&\mathbf{0}&\mathbf{0}\\
            \mathbf{0}&A&B&\mathbf{0}&\ldots&\mathbf{0}\\
            \mathbf{0}&\mathbf{0}&A&B&\ldots&\mathbf{0}\\
            \vdots&\vdots&\vdots&\vdots&\vdots&\vdots&\\
            B&\mathbf{0}&\ldots&\mathbf{0}&\mathbf{0}&A\\
        \end{bmatrix}
\end{equation*}
where $A$ is an $N\times N$ block circulant matrix and $B$ is an $N\times N$ block diagonal matrix.

\begin{equation*}\label{matricesAB}
    A =
        \begin{bmatrix}
            D&E&\mathbf{0}&\ldots&\mathbf{0}&\mathbf{0}\\
            \mathbf{0}&D&E&\mathbf{0}&\ldots&\mathbf{0}\\
            \mathbf{0}&\mathbf{0}&D&E&\ldots&\mathbf{0}\\
            \vdots&\vdots&\vdots&\vdots&\vdots&\vdots&\\
            E&\mathbf{0}&\ldots&\mathbf{0}&\mathbf{0}&D\\
        \end{bmatrix}
    B =
        \begin{bmatrix}
            F&\mathbf{0}&\mathbf{0}&\ldots&\mathbf{0}&\mathbf{0}\\
            \mathbf{0}&F&\mathbf{0}&\mathbf{0}&\ldots&\mathbf{0}\\
            \mathbf{0}&\mathbf{0}&F&\mathbf{0}&\ldots&\mathbf{0}\\
            \vdots&\vdots&\vdots&\vdots&\vdots&\vdots&\\
            \mathbf{0}&\mathbf{0}&\ldots&\mathbf{0}&\mathbf{0}&F\\
            \end{bmatrix}
\end{equation*}
Matrices $D$, $E$ and $F$ are given by
\begin{equation*}\label{matricesCDE}
    D =
        \begin{bmatrix}
            d&-1\\
            b&-c\\
        \end{bmatrix}
    E =
        \begin{bmatrix}
            -\gamma&0\\
            0&0\\
        \end{bmatrix}
    F =
        \begin{bmatrix}
            -\delta&0\\
            0&0\\
        \end{bmatrix}
\end{equation*}
where $d=a+\gamma+\delta.$ Let us define $G_{(r,s)}=D+\zeta_r E+\zeta_s F$, where $\zeta_r=e^\frac{2\pi ir}{N}$ and $\zeta_r=e^\frac{2\pi is}{N}$ are the $r^{th}$ and $s^{th}$ roots of unity, respectively, with $0\leq r,s\leq N-1.$ To find the eigenvectors of $M$, we first start with the eigenvectors of $G_{(r,s)},$ which are $\displaystyle{{v_{\left(r,s\right)}}_{1,2}=\left[1,a+\gamma\left(1-\zeta_r\right)+\delta\left(1-\zeta_s\right)-{\lambda_{\left(r,s\right)}}_{1,2}\right]^T},$ where ${\lambda_{\left(r,s\right)}}_{1,2}$ are the eigenvalues corresponding to each pair $(r,s).$ Then define $\displaystyle{{w_{\left(r,s\right)}}_{1,2}=\left[{v_{\left(r,s\right)}}_{1,2},\ldots,{v_{\left(r,s\right)}}_{1,2}\right]^T}.$

\begin{figure}[ht]
\centering
\begin{center}
\includegraphics[width=6.77966cm,height=5.08474cm]{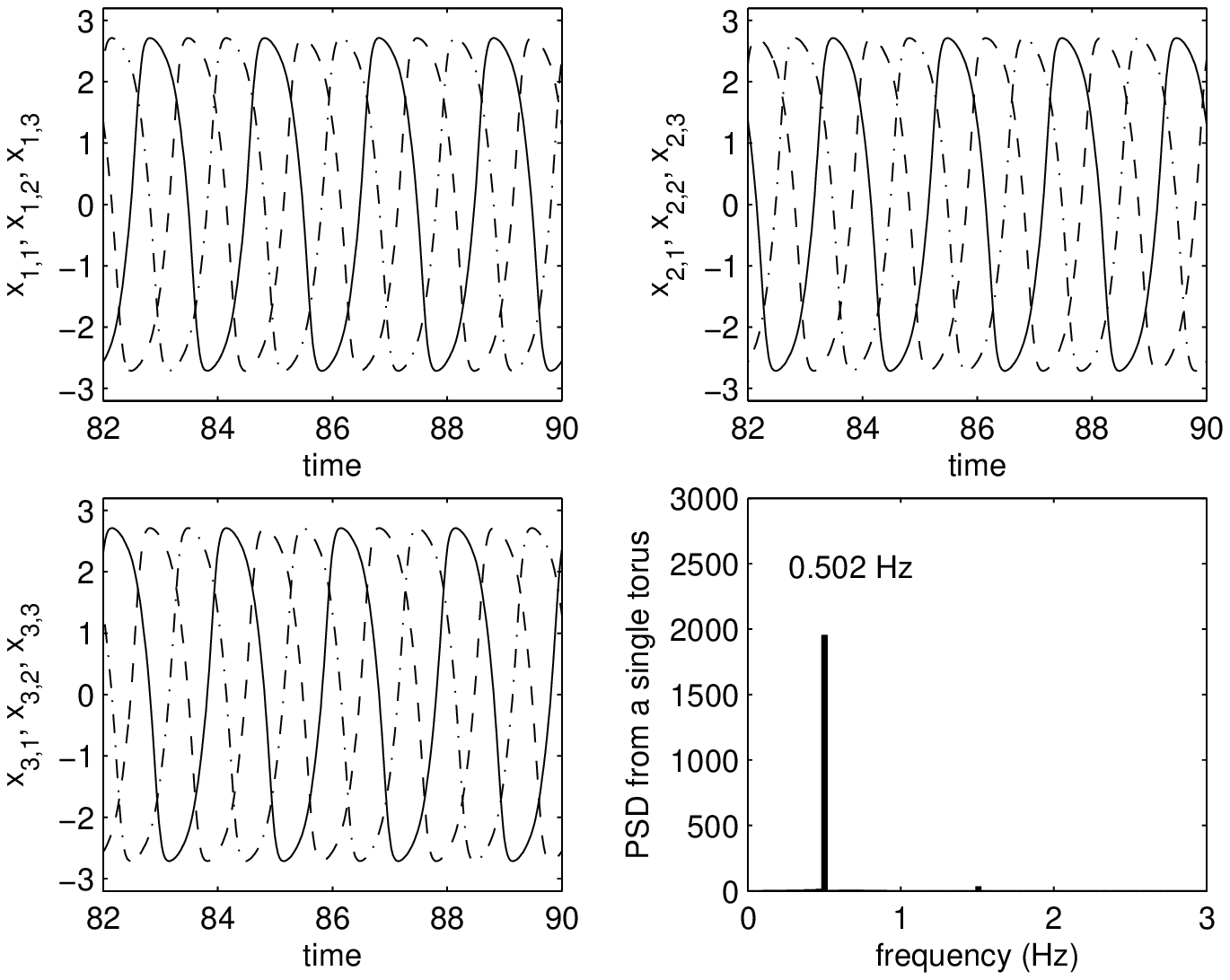}
\caption{\label{fig3_un_toro} Time series of variables a) $x_{1,1}$ (continuous line), $x_{1,2}$ (dashed line) , $x_{1,3}$ (dotted and dashed line); b) $x_{2,1}$ (continuous line), $x_{2,2}$ (dashed line) , $x_{2,3}$ (dotted and dashed line) and c) $x_{3,1}$ (continuous line), $x_{3,2}$ (dashed line) , $x_{3,3}$ (dotted and dashed line) from the integration of system (\ref{nFHN_2d}) corresponding to a torus of $3\times 3$ neurons. Coupling constants are, $\gamma=2$ and $\delta=2$, while the rest of the parameters and initial conditions are the same as in Figure \ref{fig2_un_toro}. There is a constant phase shift between the variables belonging to the same row. Power Spectrum Density indicating that all variables oscillate with the same frequency.}
\begin{picture}(0,0)\vspace{0.5cm}
\put(2,120){\large{$\mathbf{(d)}$}}
\put(-93,120){\large{$\mathbf{(c)}$}}
\put(2,195){\large{$\mathbf{(b)}$}}
\put(-93,195){\large{$\mathbf{(a)}$}}
\put(110,190){\includegraphics[width=1.6825cm,height=1.2225cm]{1toro.eps}}
\put(115,180){\scriptsize{single torus}}
\put(94,170){\scriptsize{array of $3\times 3$ neurons}}
\end{picture}\vspace{-1cm}
\end{center}
\label{third_figure}
\end{figure}

To find the eigenvectors of $M,$ we proceed as in \cite{T97} so that for each pair $(r,s),$ we obtain the corresponding pair of eigenvectors
\begin{equation*}\label{eigvects}
    \begin{array}{l}
        \displaystyle{{\nu_{\left(r,s\right)}}_{1,2}=\frac{1}{N}\left[{w_{\left(r,s\right)}}_{1,2},\zeta^{k}{w_{\left(r,s\right)}}_{1,2},\zeta^{2k}{w_{\left(r,s\right)}}_{1,2},\ldots,\zeta^{\left(N-1\right)k}{w_{\left(r,s\right)}}_{1,2}\right]^T}
    \end{array}
\end{equation*}
where $0\leq k\leq N-1.$

It is straightforward to derive the explicit expression of the $2N^2$ eigenvalues of $M;$ there is a pair of complex conjugate eigenvalues of the form
\begin{equation}\label{eigvals}
    \begin{array}{l}
        \displaystyle{{\lambda_{\left(r,s\right)}}_{1,2}=\frac{1}{2}\left[-c+a+\gamma(1-\zeta_r)+\delta(1-\zeta_s)\right]}\\
        \hspace{1.7cm}\displaystyle{\pm\frac{1}{2}\sqrt{\left[c+a+\gamma(1-\zeta_r)+\delta(1-\zeta_s)\right]^2-4b}}.
    \end{array}
\end{equation}
corresponding to each pair $(r,s).$

If $\lambda_{1,2}$ correspond to the mode $(r,s)=(0,0)$ then expression (\ref{eigvals}) reduces to the eigenvalues of system (\ref{FHN1}) of uncoupled neurons, linearized about the origin, \begin{equation}\label{eigvalsFHN1}
    \begin{array}{l}
        \displaystyle{\lambda_{1,2}=\frac{-c+a\pm\sqrt{(c+a)^2-4b}}{2}}.
    \end{array}
\end{equation}
Since $\frac{\partial Re\lambda}{\partial a}\neq0$ and $\frac{\partial Re\lambda}{\partial c}\neq0$ it follows that if taking $a$ or $c$ as bifurcation parameter, the pair of complex conjugate eigenvalues (\ref{eigvalsFHN1}) will cross the imaginary axis of the complex plane at $c=a$ whenever $c^2<b,$ and system (\ref{nFHN_2d}) undergoes a Hopf bifurcation within the mode $(r,s)=(0,0).$ Notice that the Hopf bifurcation does not depend on the interneuronal coupling constants.

We will now analyze the conditions for Hopf bifurcation of system (\ref{nFHN_2d}), when not both $r$ and $s$ are zero simultaneously. Let $\mathcal{RS}$ be the set of combinations $(r,s);$ moreover, the set $\mathcal{NZ}=\mathcal{RS}\setminus(0,0),$ defines the combinations set of $(r,s),$ where not both $r$ and $s$ are zero simultaneously. Let us define
%\begin{equation}\label{a_1}
%    \begin{array}{l}
        \begin{align}\label{a_1}
            a_1 &=\left(a+c\right)^2+2\left(a+c\right)\left(\gamma-\gamma\cos{\frac{2\pi r}{N}}+\delta-\delta\cos{\frac{2\pi s}{N}}\right)
                \nonumber\\
                &\quad+\gamma^2\left(1-2\cos{\frac{2\pi r}{N}}+\cos{\frac{4\pi r}{N}}\right)
                \nonumber\\
                &\quad+\delta^2\left(1-2\cos{\frac{2\pi s}{N}}+\cos{\frac{4\pi s}{N}}\right)\\
                %&+2\gamma\delta\left(1-\cos{\frac{2\pi r}{N}}-\cos{\frac{2\pi s}{N}}+\cos{\frac{2\pi r}{N}}\cos{\frac{2\pi s}{N}}-\sin{\frac{2\pi r}{N}}\sin{\frac{2\pi s}{N}}\right)
                &\quad+2\gamma\delta\left[1-\cos{\frac{2\pi r}{N}}-\cos{\frac{2\pi s}{N}}+\cos{\frac{2\pi\left(r+s\right)}{N}}\right]
                \nonumber\\
                &\quad-4b,
                \nonumber
        \end{align}
and
        \begin{align}\label{b_1}
            b_1 &=2\left(a+c\right)\left(-\gamma\sin{\frac{2\pi r}{N}}-\delta\sin{\frac{2\pi s}{N}}\right)
                \nonumber\\
                &\quad+\gamma^2\left(-2\sin{\frac{2\pi r}{N}}+\sin{\frac{4\pi r}{N}}\right)+\delta^2\left(-2\sin{\frac{2\pi s}{N}}+\sin{\frac{4\pi s}{N}}\right)\\
                &\quad+2\gamma\delta\left[-\sin{\frac{2\pi r}{N}}-\sin{\frac{2\pi s}{N}}+\sin{\frac{2\pi\left(r+s\right)}{N}}\right].
                \nonumber
        \end{align}

In the following Proposition we summarize the conditions for Hopf bifurcation and stability of system (\ref{nFHN_2d}), when $(r,s)\in\mathcal{NZ}.$
\begin{prop}\label{proposition}
Consider the linearization of system (\ref{nFHN_2d}) around $(0,\ldots,0),$ and assume the square root of eigenvalues in equation (\ref{eigvals}) is a complex number of the form $\sqrt{g+ih},$ where in general, $h\neq0.$ Then, if $(r,s)\in\mathcal{NZ},$
    \begin{itemize}
        \item [(i)] Not both real parts $\mathrm{Re}{\lambda_{\left(r,s\right)}}_{1,2},$ of the eigenvalues corresponding to the pair $(r,s),$  can be zero simultaneously.
        \item [(ii)] If taking as bifurcation parameter $\gamma$ or $\delta,$ and $\frac{\partial \mathrm{Re}{\lambda_{\left(r,s\right)}}}{\partial\gamma}\neq0$ or $\frac{\partial \mathrm{Re}{\lambda_{\left(r,s\right)}}}{\partial\delta}\neq0$ then a Hopf bifurcation occurs when $-c+a+\gamma\left(1-\cos{\frac{2\pi r}{N}}\right)+\delta\left(1-\cos{\frac{2\pi s}{N}}\right)=\left\{\pm\frac{1}{\sqrt{2}}\sqrt{\sqrt{a_1^2+b_1^2}+a_1}\right\},$
        \item [(iii)] Origin is a stable equilibrium of system (\ref{nFHN_2d}) if $-c+a+\gamma\left(1-\cos{\frac{2\pi r}{N}}\right)+\delta\left(1-\cos{\frac{2\pi s}{N}}\right)<\mathrm{min}\left\{\pm\frac{1}{\sqrt{2}}\sqrt{\sqrt{a_1^2+b_1^2}+a_1}\right\},$ unstable otherwise.
    \end{itemize}
\end{prop}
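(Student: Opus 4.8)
The plan is to make the real parts of the eigenvalues in (\ref{eigvals}) completely explicit and then to read off the three items from a single formula. Writing $1-\zeta_r=\left(1-\cos\tfrac{2\pi r}{N}\right)-i\sin\tfrac{2\pi r}{N}$ and likewise in $s$, introduce $P=-c+a+\gamma(1-\zeta_r)+\delta(1-\zeta_s)$ and $Q=c+a+\gamma(1-\zeta_r)+\delta(1-\zeta_s)$, so that ${\lambda_{(r,s)}}_{1,2}=\tfrac12 P\pm\tfrac12\sqrt{Q^{2}-4b}$ and $\mathrm{Re}\,P$ equals $p:=-c+a+\gamma\left(1-\cos\tfrac{2\pi r}{N}\right)+\delta\left(1-\cos\tfrac{2\pi s}{N}\right)$, the quantity that appears in (ii) and (iii).

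The one genuine computation, and the step I expect to be the only real obstacle, is to expand $Q^{2}-4b$ into its real and imaginary parts and to check that these are precisely $a_1$ and $b_1$ of (\ref{a_1})--(\ref{b_1}). This is a bookkeeping exercise with the identities $(1-\cos\theta)^{2}-\sin^{2}\theta=1-2\cos\theta+\cos2\theta$ for the $\gamma^{2}$ and $\delta^{2}$ contributions, and $(1-\cos\alpha)(1-\cos\beta)-\sin\alpha\sin\beta=1-\cos\alpha-\cos\beta+\cos(\alpha+\beta)$ together with $(1-\cos\alpha)\sin\beta+(1-\cos\beta)\sin\alpha=\sin\alpha+\sin\beta-\sin(\alpha+\beta)$ for the $\gamma\delta$ cross terms. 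Thus the radicand in (\ref{eigvals}) equals $a_1+ib_1$, i.e. the $g+ih$ of the statement with $g=a_1$ and $h=b_1$.

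From here everything is formal. The real part of a square root of $a_1+ib_1$ equals $\pm\tfrac{1}{\sqrt2}\sqrt{\sqrt{a_1^{2}+b_1^{2}}+a_1}$ (the radicand $\sqrt{a_1^{2}+b_1^{2}}+a_1$ is $\ge0$ for all $a_1,b_1$), hence $\mathrm{Re}\,{\lambda_{(r,s)}}_{1,2}=\tfrac12\left(p\pm\tfrac{1}{\sqrt2}\sqrt{\sqrt{a_1^{2}+b_1^{2}}+a_1}\right)$. For (i): the two real parts differ by $\tfrac{1}{\sqrt2}\sqrt{\sqrt{a_1^{2}+b_1^{2}}+a_1}$, which under the standing hypothesis $h=b_1\neq0$ is strictly positive (then $\sqrt{a_1^{2}+b_1^{2}}>|a_1|\ge-a_1$), so they cannot vanish together. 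For (ii): equating either real part to zero gives $p=\mp\tfrac{1}{\sqrt2}\sqrt{\sqrt{a_1^{2}+b_1^{2}}+a_1}$, i.e. $p\in\left\{\pm\tfrac{1}{\sqrt2}\sqrt{\sqrt{a_1^{2}+b_1^{2}}+a_1}\right\}$; since $\zeta_{N-r}=\overline{\zeta_r}$ this eigenvalue is purely imaginary and is paired with its complex conjugate arising from the mode $(N-r,N-s)$, so one obtains a genuine pair $\pm i\omega$, and the crossing is transverse exactly under the stated nondegeneracy $\partial\,\mathrm{Re}\,{\lambda_{(r,s)}}/\partial\gamma\neq0$ or $\partial\,\mathrm{Re}\,{\lambda_{(r,s)}}/\partial\delta\neq0$, which gives the Hopf bifurcation by the standard theorem. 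For (iii): the larger of the two real parts is $\tfrac12\left(p+\tfrac{1}{\sqrt2}\sqrt{\sqrt{a_1^{2}+b_1^{2}}+a_1}\right)$, which is negative precisely when $p<-\tfrac{1}{\sqrt2}\sqrt{\sqrt{a_1^{2}+b_1^{2}}+a_1}=\min\left\{\pm\tfrac{1}{\sqrt2}\sqrt{\sqrt{a_1^{2}+b_1^{2}}+a_1}\right\}$; imposing this for every $(r,s)\in\mathcal{NZ}$, together with the condition $c>a$ already found for the $(0,0)$ mode, places all eigenvalues of $M$ in the open left half-plane and thus makes the origin asymptotically stable, whereas failure of the inequality for some mode leaves an eigenvalue with nonnegative real part and the origin unstable.
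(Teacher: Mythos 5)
Your proposal is correct and follows essentially the same route as the paper: identify the radicand in (\ref{eigvals}) as $a_1+ib_1$, use the explicit formula for the real part of a complex square root to obtain $\mathrm{Re}\,{\lambda_{(r,s)}}_{1,2}$ as in (\ref{real}), and then read off (i)--(iii). Your treatment is in fact slightly more careful than the paper's at a few points (strict positivity of $a_2$ when $b_1\neq 0$, the conjugate pairing with the mode $(N-r,N-s)$, and quantifying the stability condition over all modes), but the underlying argument is the same.
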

\begin{proof}
    \begin{itemize}
        \item [(i)] Let us suppose the contrary, i.e. that we have $\displaystyle{\mathrm{Re}{\lambda_{\left(r,s\right)}}_{1}=\mathrm{Re}{\lambda_{\left(r,s\right)}}_{2}}=0.$ We will prove that this leads to contradiction. In order to evaluate the global real part as well as the global imaginary part of eigenvalues of the extended system (\ref{nFHN_2d}), we need to rewrite equation (\ref{eigvals}) by getting rid of the square root.

For this purpose, we infer a well known result from elementary algebra; we have that if $g$ and $h$ are real $(h\neq0),$ then $\sqrt{g+ih}=a_2+b_2i,$ where $a_2$ and $b_2$ are real and given by
\begin{equation}\label{a_2}
    \begin{array}{l}
        \displaystyle{a_2=\frac{1}{\sqrt{2}}\sqrt{\sqrt{g^2+h^2}+g}},
    \end{array}
\end{equation}
and
\begin{equation}\label{b_2}
    \begin{array}{l}
        \displaystyle{b_2=\frac{\mathrm{sgn}\left(h\right)}{\sqrt{2}}\sqrt{\sqrt{g^2+h^2}-g}}.
    \end{array}
\end{equation}
Therefore, if
$\sqrt{g+ih}=\sqrt{\left[c+a+\gamma(1-\zeta_r)+\delta(1-\zeta_s)\right]^2-4b},$ then $g=a_1,$ where $a_1$ is given in equation (\ref{a_1}), and $h=b_1$ is given in equation (\ref{b_1}).

From equations (\ref{eigvals}), (\ref{a_1}), (\ref{b_1}), (\ref{a_2}) and (\ref{b_2}), we have that the global real and imaginary parts of eigenvalues (\ref{eigvals}) are given by

\begin{equation}\label{real}
    \begin{array}{l}
        \displaystyle{\mathrm{Re}{\lambda_{\left(r,s\right)}}_{1,2}=\frac{\left[-c+a+\gamma\left(1-\cos{\frac{2\pi r}{N}}\right)+\delta\left(1-\cos{\frac{2\pi s}{N}}\right)\pm{a_2}\right]}{2}}
    \end{array}
\end{equation}
and
\begin{equation*}\label{imag}
    \begin{array}{l}
        \displaystyle{\mathrm{Im}{\lambda_{\left(r,s\right)}}_{1,2}=\frac{i}{2}\left(-\gamma\sin{\frac{2\pi r}{N}}-\delta\sin{\frac{2\pi s}{N}}\pm{b_2}\right)},
    \end{array}
\end{equation*}
respectively, where $a_2$ and $b_2$ are functions of $a_1$ and $b_1$ given by equations (\ref{a_2}) and (\ref{b_2}), respectively. It follows that we can rewrite equation (\ref{eigvals}) as
\begin{equation*}\label{eigvals_rewritten}
    \begin{array}{l}
        \displaystyle{{\lambda_{\left(r,s\right)}}_{1,2}=\mathrm{Re}{\lambda_{\left(r,s\right)}}_{1,2}+i\mathrm{Im}{\lambda_{\left(r,s\right)}}_{1,2}}.
    \end{array}
\end{equation*}
Since we supposed that $\displaystyle{\mathrm{Re}{\lambda_{\left(r,s\right)}}_{1}=\mathrm{Re}{\lambda_{\left(r,s\right)}}_{2}}=0,$  from equation (\ref{real}) we have $a_2=0,$ which means (from equation (\ref{a_2})) that $b_1=0,$ which contradicts the initial assumption that $h\neq0.$
        \item [(ii)] If applying condition $\displaystyle{\mathrm{Re}{\lambda_{\left(r,s\right)}}_{1}=0}$ or $\displaystyle{\mathrm{Re}{\lambda_{\left(r,s\right)}}_{2}=0},$ to equation (\ref{real}), we obtain the desired inequality.
        \item [(iii)] If we put the condition  $\displaystyle{\mathrm{Re}{\lambda_{\left(r,s\right)}}_{1,2}<0}$ in equation (\ref{real}), we obtain $-c+a+\gamma\left(1-\cos{\frac{2\pi r}{N}}\right)+\delta\left(1-\cos{\frac{2\pi
        s}{N}}\right)<\mathrm{min}\left\{\pm\frac{1}{\sqrt{2}}\sqrt{\sqrt{a_1^2+b_1^2}+a_1}\right\}.$
    \end{itemize}
\end{proof}

The linearized flow in $\mathbb{R}^{2N^2}$ possesses periodic solutions if $\mathrm{Im}{\lambda_{\left(r,s\right)}}_{1,2}\neq0.$ Suppose that $\frac{\partial \mathrm{Re}{\lambda_{\left(r,s\right)}}}{\partial\gamma}\neq0$ or $\frac{\partial \mathrm{Re}{\lambda_{\left(r,s\right)}}}{\partial\delta}\neq0$ in (\ref{real}). Then, Proposition \ref{proposition} tells that by varying the coupling parameters one can qualitatively modify the periodic behavior of the coupled system (\ref{nFHN_2d}), with respect to uncoupled neurons (\ref{FHN1}). A remarkable property of the coupled system resides in the fact that self sustained oscillatory behavior of the global system can appear even if individual neurons taken separately act as damped oscillators, as seen in Figure \ref{fig2_un_toro}. This behavior is determined by the value of the coupling constants $\gamma,\delta,$ as well as $N.$
Let $a_3=\frac{1}{2}\left[\gamma\left(1-\cos{\frac{2\pi r}{N}}\right)+\delta\left(1-\cos{\frac{2\pi s}{N}}\right)\pm{a_2}\right].$ The mechanism of this coupling-induced difference of the system behavior can be better seen if directly comparing the $\mathrm{Re}{\lambda_{\left(r,s\right)}}_{1,2}$ from equation (\ref{real}) of the coupled system (\ref{nFHN_2d}) with ${Re\lambda}_{1,2}$ from equation (\ref{eigvalsFHN1}) of the one-neuron-system (\ref{FHN1})
\begin{equation}\label{real_comparison}
    \begin{array}{l}
        \displaystyle{\mathrm{Re}{\lambda_{\left(r,s\right)}}_{1,2}={Re\lambda}_{1,2}+a_3}.
    \end{array}
\end{equation}
From equation (\ref{real_comparison}), we have that if $\frac{\partial a_3}{\partial \gamma}>0$ or $\frac{\partial a_3}{\partial \delta}>0,$ system loses stability by increasing $\gamma, \delta,$ by approaching the Hopf bifurcation point.

When $N$ is even, however, a condition for the Hopf bifurcation of the discrete rotating waves, is taking into account not only nearest-neighbor but also next-nearest neighbor couplings \cite{GS86}, \cite{LPV03}.

An interesting question is determining the type (supercritical or subcritical) of the Hopf bifurcation obtained by linearization about origin, described in this section.
The first step in this analysis consists in studying the stability of the limit cycle arose from the Hopf bifurcation in the three-parameter family of modified FHN systems \eqref{FHN1}. Hopf bifurcation of the classical FHN system \cite{M02} has been studied by many authors (see for example \cite{kostova}), but these results are not applicable to our equation \eqref{FHN1}, which is a modified FHN system. In the Appendix we apply a stability criterium due to Guckenheimer and Holmes \cite{guck}, to prove that the Hopf bifurcation of system \eqref{FHN1} obtained by linearization about origin, is supercritical. The stability of the limit cycle experimented by the coupled FHN systems within one torus (represented by equation \eqref{nFHN_2d}), as well as coupled tori with different intertori architectures (in the case of two tori, represented by equation \eqref{nFHN_2d_2tori}), is an extense analytical study involving reduction to the central manifold, as well as a bifurcation numerical analysis and it will be presented in a forthcoming paper.

\begin{figure}[ht]
\begin{center}
\includegraphics[width=6.77966cm,height=5.08474cm]{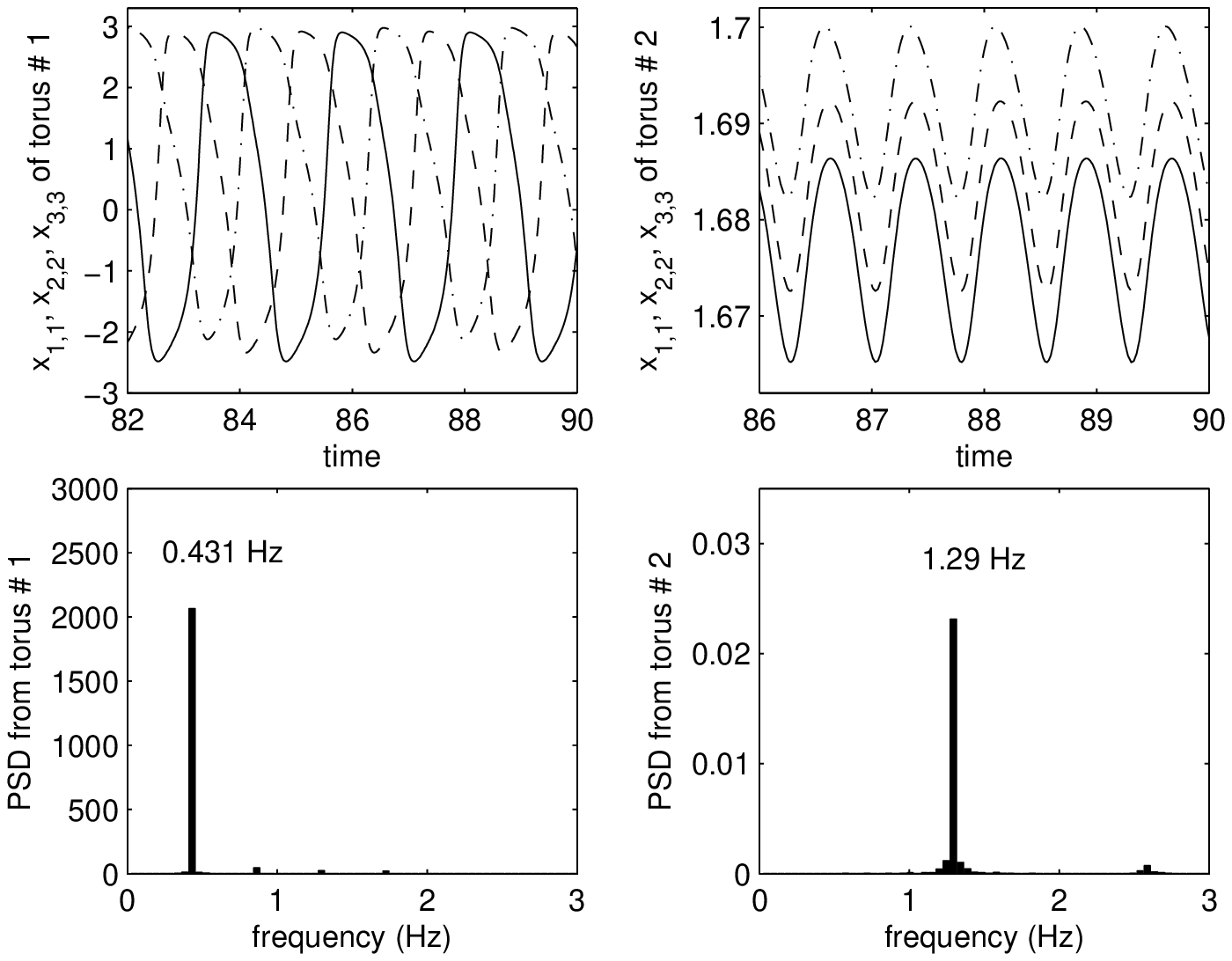}
\includegraphics[width=6.77966cm,height=5.08474cm]{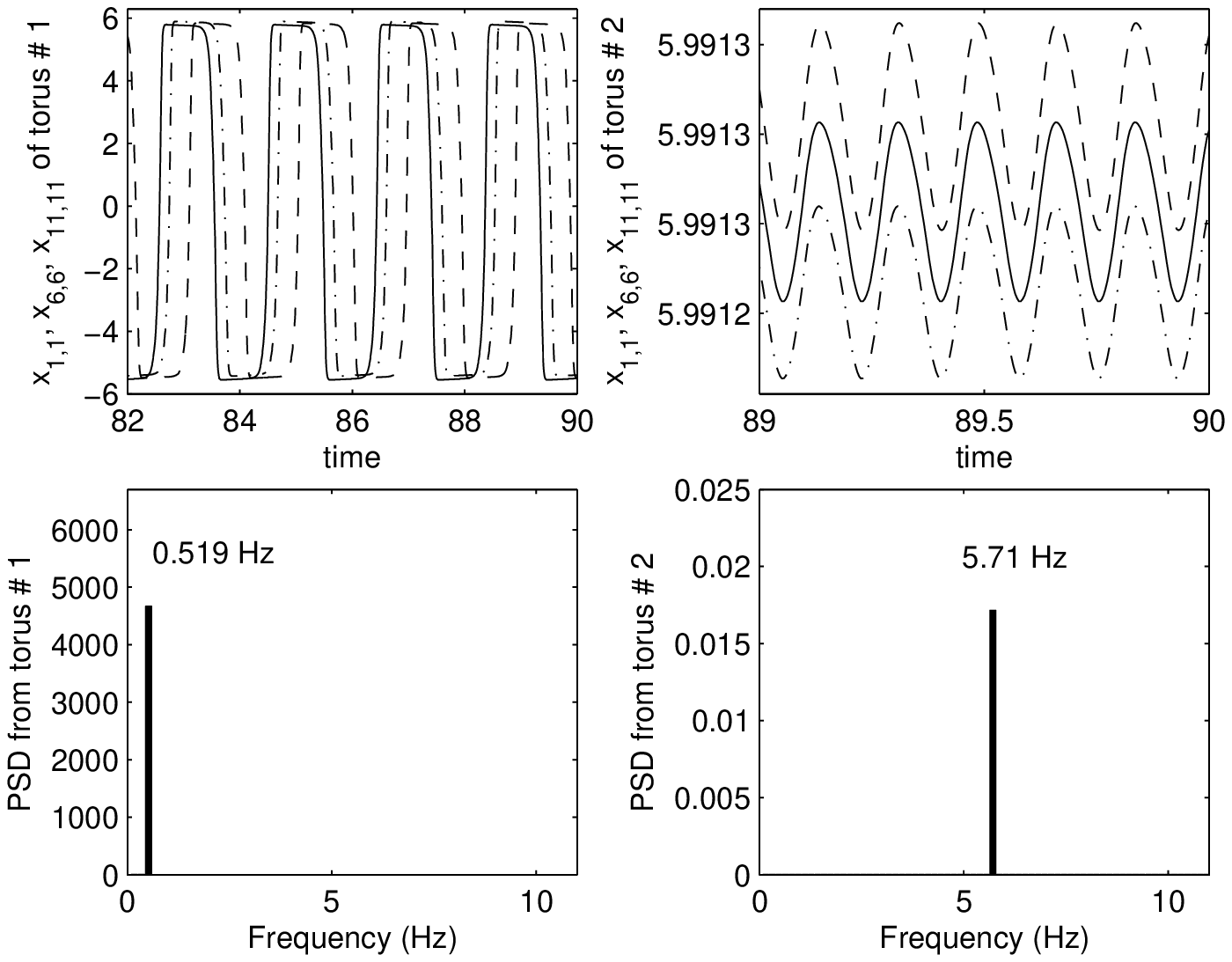}
\end{center}
\begin{picture}(0,0)
\put(298,102){\large{$\mathbf{(f)}$}}
\put(200,102){\large{$\mathbf{(e)}$}}
\put(98,102){\large{$\mathbf{(b)}$}}
\put(2,102){\large{$\mathbf{(a)}$}}
\put(298,30){\large{$\mathbf{(h)}$}}
\put(200,30){\large{$\mathbf{(g)}$}}
\put(98,30){\large{$\mathbf{(d)}$}}
\put(2,30){\large{$\mathbf{(c)}$}}
\put(63,-7){\includegraphics[width=2.626cm,height=0.978cm]{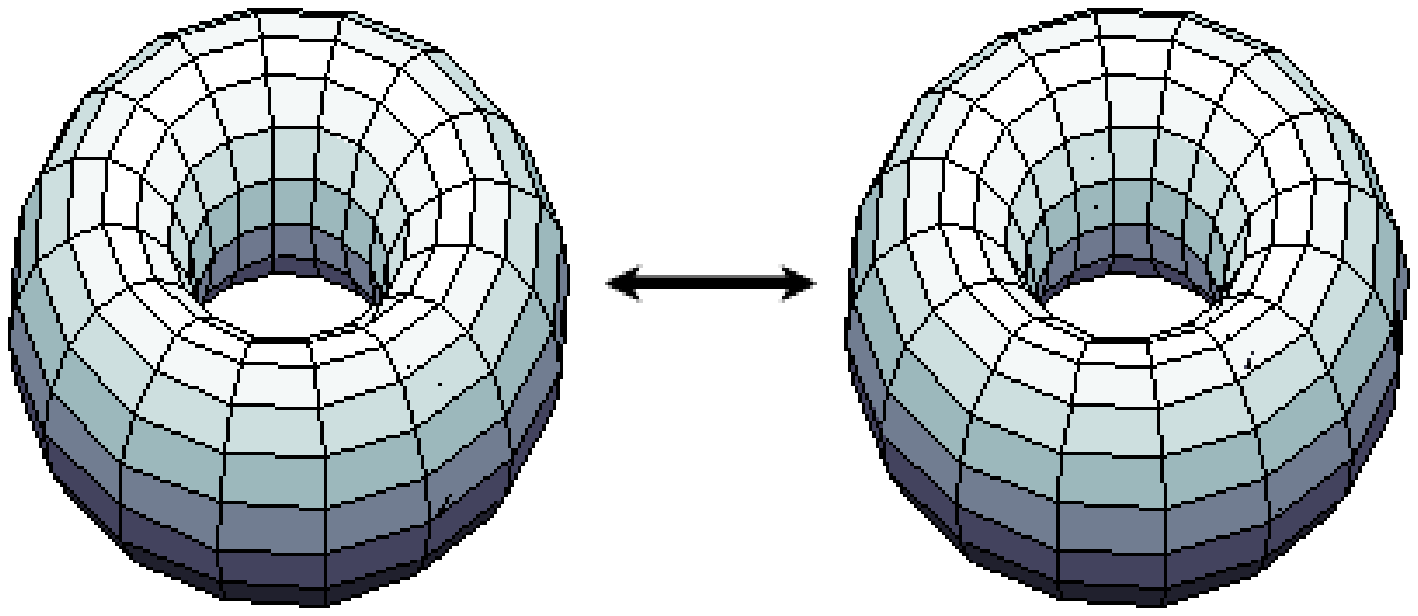}}
\put(268,-7){\includegraphics[width=2.626cm,height=0.978cm]{2tori_fig_4.eps}}
\put(43,185){\tiny{two tori of}}
\put(37,175){\tiny{$3\times 3$ neurons}}
\put(135,185){\tiny{two tori of}}
\put(129,175){\tiny{$3\times 3$ neurons}}
\put(238,185){\tiny{two tori of}}
\put(229,175){\tiny{$11\times 11$ neurons}}
\put(332,185){\tiny{two tori of}}
\put(323,175){\tiny{$11\times 11$ neurons}}
\put(43,65){\tiny{two tori of}}
\put(37,55){\tiny{$3\times 3$ neurons}}
\put(152,75){\tiny{two tori of}}
\put(158,65){\tiny{$3\times 3$}}
\put(154,55){\tiny{neurons}}
\put(238,65){\tiny{two tori of}}
\put(229,55){\tiny{$11\times 11$ neurons}}
\put(317,75){\tiny{two tori of}}
\put(320,65){\tiny{$11\times 11$}}
\put(320,55){\tiny{neurons}}
\put(41,-20){\scriptsize{two tori of $3\times 3$ neurons each}}
\put(242,-20){\scriptsize{two tori of $11\times 11$ neurons each}}
\end{picture}
\vspace{0.7cm}
\caption{Time series of variables $x_{1,1}$ (continuous line), $x_{2,2}$ (dashed line) , $x_{3,3}$ (dotted and dashed line) from the torus $\#1$ (a) and torus $\#2$ (b), from the integration of system (\ref{nFHN_2d}) corresponding to two coupled tori, each of $3\times 3$ neurons. Variables are phase shifted in torus $\#1$, while they are synchronized in torus $\#2$. Coupling constants are, $\gamma=2$ and $\delta=2,$ $\varepsilon=0.5$, while the rest of the parameters and initial conditions are the same as in Figure \ref{fig2_un_toro}. Power Spectrum Density showing that neurons of torus $\#2$ (d) oscillate at $1.29=3\times 0.431$ Hz i.e. with 3--times the frequency of neurons from torus $\#1$ (c). Time series of variables $x_{1,1}$ (continuous line), $x_{6,6}$ (dashed line) , $x_{11,11}$ (dotted and dashed line) from the torus $\#1$ (e) and torus $\#2$ (f), from the integration of system (\ref{nFHN_2d}) corresponding to two coupled tori, each of $11\times 11$ neurons.
Neurons are phase shifted in torus $\#1$, while they are synchronized in torus $\#2$. Coupling constants are, $\gamma=8$ and $\delta=8$, $\varepsilon=0.5$, while the rest of the parameters are the same as in Figure \ref{fig2_un_toro}. Initial conditions of the neurons $x_{1,\beta},y_{1,\beta}$, $\beta=1,\ldots, 4$ of the torus $\#1$ are $x_{1,\beta}=(6.489,9.3843,6.9745,3.3656)$ $y_{1,\beta}=(0.8862,1.7536,8.3197,7.9935)$; initial conditions of the neurons $x_{11,\beta},y_{11,\beta}$, $\beta=8,\ldots, 11$ of the torus $\#2$ are $x_{1,\beta}=(0.5475,9.7797,2.3655,3.8290)$ $y_{1,j}=(4.8460,4.3027,7.5896,4.9924)$ while the initial conditions for the rest of neurons in both tori have  have been obtained by successive repetitions of the initial conditions used in Figure \ref{fig2_un_toro}. Power Spectrum Density showing that neurons of torus 2 (h) oscillate at $57.1=11\times 0.519$ Hz i.e. with 11--times the frequency of neurons from torus 1 (g).}
\label{two_tori}
\end{figure}

\section{Analysis of the dynamic behavior of one torus with $\mathbb{Z}_{N}\times\mathbb{Z}_{N}\times\mathbb{Z}_{2}$ symmetry}\label{onetorus_theor}

In a dissipative system such as (\ref{nFHN_2d}), one method for finding periodic solutions of a given type is Hopf bifurcation. This has been analyzed in the previous section, from the point of view of the eigenvalues obtained at the linearization about origin. In the following, we would like to analyze the periodic solutions to system (\ref{nFHN_2d}), from a group theoretic angle. A very useful tool is Theorem $3.4$ in \cite{GS04}.
\begin{priteo}\label{equiv}
    Let $\mathbf{\Gamma}$ be a finite group acting on $\mathbb{R}^{2n}.$ There is a periodic solution to some $\mathbf{\Gamma}$-equivariant system of ODEs on $\mathbb{R}^{2n}$ with spatial symmetries $K$ and spatio-temporal symmetries $H$ if and only if
            \begin{itemize}
                \item [(a)] H/K is cyclic;
                \item [(b)] K is an isotropy subgroup;
                \item [(c)] $\mathrm{dim}~\mathrm{Fix}(K)\geq2.$ If $\mathrm{dim}~\mathrm{Fix}(K)=2,$ then either $H=K$ or $H=N(K);$
                \item [(d)] H fixes a connected component of $\mathrm{Fix}(K)\setminus L_K.$
            \end{itemize}
\end{priteo}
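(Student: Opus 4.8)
The plan is to prove the two implications separately, treating necessity as a more-or-less direct consequence of the general theory of equivariant flows and reserving the real work for the construction in the sufficiency direction.

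For necessity, suppose $x(t)$ is a periodic solution of least period $\tau$ of a $\mathbf{\Gamma}$-equivariant system, with group of spatial symmetries $K$ and group of spatio-temporal symmetries $H$. By definition $K$ is the isotropy subgroup of the points on the trajectory, which is (b). Each $\gamma\in H$ maps the trajectory to itself, and by uniqueness of solutions of the initial value problem it does so as a phase shift $x(t)\mapsto x(t+\theta(\gamma))$; the assignment $\gamma\mapsto\theta(\gamma)\pmod{\tau}$ is a homomorphism of $H$ onto a finite subgroup of $\mathbb{S}^1$ with kernel $K$, so $H/K$ embeds in $\mathbb{S}^1$ and is cyclic, giving (a). Since $x(t)$ lies in $\mathrm{Fix}(K)$ and is a nonconstant loop, $\mathrm{Fix}(K)$ has dimension at least $2$; the refinement when the dimension equals $2$ follows by examining the induced linear $N(K)/K$-action on the plane $\mathrm{Fix}(K)$ and checking that only $H=K$ or $H=N(K)$ is compatible with an invariant circle carrying a free phase-shift action — this planar case is the delicate point, discussed below. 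Finally, because the spatial symmetry is exactly $K$, every point of the trajectory has isotropy precisely $K$, so the whole trajectory lies in a single connected component of $\mathrm{Fix}(K)\setminus L_K$, and $H$ maps the trajectory, hence that component, to itself, which is (d).

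For sufficiency I would build the required equivariant vector field inside $V:=\mathrm{Fix}(K)$ and then extend. Condition (b) lets $N(K)/K$ act on $V$, condition (c) gives $\dim V\geq 2$, and condition (d) places the finite cyclic group $\bar H:=H/K\le N(K)/K$ as a group fixing a connected component $C$ of $V\setminus L_K$. Choose a base point $x_0\in C$; its isotropy in $\mathbf{\Gamma}$ is exactly $K$, so its $\bar H$-orbit consists of $m:=|\bar H|$ distinct points of $C$. Join these cyclically by a smooth embedded loop $\Lambda\subset C$ invariant under the generator of $\bar H$ (which should act on $\Lambda$ as rotation by $\tau/m$) and disjoint from $L_K$ — possible because $\dim V\geq 2$ leaves room to push the loop off the lower-dimensional set $L_K$, the only genuinely constrained case being $\dim V=2$. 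On an $\bar H$-invariant tubular neighbourhood of $\Lambda$ in $V$, in angle–transverse coordinates adapted to $\Lambda$, one writes down an explicit $\bar H$-equivariant vector field making $\Lambda$ a hyperbolic attracting periodic orbit of period $\tau$, then spreads it to an $N(K)/K$-equivariant field on all of $V$ supported near the $N(K)/K$-orbit of $\Lambda$, keeping $\Lambda$ hyperbolic with trivial reduced spatial symmetry and spatio-temporal symmetry $\bar H$. Extending $\mathbf{\Gamma}$-equivariantly off $V$ — via a $\mathbf{\Gamma}$-invariant tubular neighbourhood of the $\mathbf{\Gamma}$-orbit of $V$, with transverse damping so that $V$ is a normally attracting invariant submanifold — produces a $\mathbf{\Gamma}$-equivariant ODE on $\mathbb{R}^{2n}$ for which $\Lambda$ has spatial symmetries exactly $K$ (all its points lie in $V\setminus L_K$) and spatio-temporal symmetries exactly $H$.

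The step I expect to be the main obstacle is the borderline case $\dim\mathrm{Fix}(K)=2$, which is precisely why the theorem carries the extra clause in (c). When $\mathrm{Fix}(K)$ is a plane, $L_K$ is a finite union of lines through the origin together with the origin, the components of $\mathrm{Fix}(K)\setminus L_K$ are open sectors, and $N(K)/K$ acts on them; one must verify that a cyclic subgroup fixing a sector and admitting an invariant loop there forces $H=K$ (no residual rotational freedom), except when $L_K$ reduces to the origin and $N(K)/K$ itself acts as a rotation group, in which case $H=N(K)$ is also attainable — and that both possibilities genuinely occur. Tracking this, together with the book-keeping needed to guarantee that the setwise $\mathbf{\Gamma}$-stabiliser of $\Lambda$ is exactly $H$ and that the extension off $\mathrm{Fix}(K)$ neither enlarges the symmetry of $\Lambda$ nor creates competing periodic orbits, is where the care concentrates; the remainder is routine equivariant transversality.
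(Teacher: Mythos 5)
First, a point of comparison: the paper does not prove this statement at all. It is quoted verbatim as Theorem~3.4 of Golubitsky and Stewart (the $H/K$ theorem) and used as a black box in Section~4, so there is no in-paper proof to measure yours against. Judged on its own terms, your proposal reproduces the correct architecture of the known proof: necessity via uniqueness of solutions of the initial value problem (the phase-shift map $H\to\mathbb{S}^1$ is a homomorphism with kernel $K$, whence (a) and (b)), and connectedness of the trajectory inside $\mathrm{Fix}(K)\setminus L_K$ (whence (d)); sufficiency by building an $H/K$-invariant embedded loop in the distinguished component and an equivariant vector field having it as a hyperbolic periodic orbit. One omission in the necessity half: you should record that $H\subseteq N(K)$ (an element preserving the trajectory setwise conjugates its pointwise stabiliser to itself); without this, $H$ does not even act on $\mathrm{Fix}(K)$ and the alternative $H=N(K)$ in (c) is not meaningful.

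The genuine gaps are exactly the places you flag and then defer, and they are the substance of the theorem rather than routine book-keeping. (i) The planar dichotomy in (c): the needed argument is that if $H\supsetneq K$ then no $\sigma\in H\setminus K$ can fix a line in $\mathrm{Fix}(K)$ (such a line lies in $L_K$ and would separate the orbit from its image), so the orbit is a Jordan curve enclosing the origin and invariant under a nontrivial rotation; a finite-order rotation carrying it to a distinct periodic orbit would generate an infinite strictly nested chain of invariant Jordan curves, which is impossible, so every element of $N(K)/K$ preserves the orbit and $H=N(K)$. Your sketch gestures at ``checking compatibility'' but does not supply this. (ii) In the sufficiency direction you must actually exhibit an embedded loop through the $m$ points $h^jx_0$ on which the generator acts as the $1/m$ phase shift and whose setwise $\Gamma$-stabiliser is exactly $H$, not larger; for $\dim\mathrm{Fix}(K)\geq3$ this is a genericity argument, but for $\dim\mathrm{Fix}(K)=2$ it is precisely where the extra clause of (c) enters as a hypothesis, and you leave that case open. (iii) The equivariant extension off $\mathrm{Fix}(K)$ is not automatic: averaging an arbitrary extension over $\Gamma$ does not restrict on $\mathrm{Fix}(K)$ to the field you constructed there, so a tubular-neighbourhood or invariant-partition-of-unity construction must be written out, together with the verification that it neither enlarges the symmetry of the orbit nor destroys it. As it stands the proposal is a correct outline of the Golubitsky--Stewart proof, not a complete proof.
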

When these conditions hold, there exists a smooth $\mathbf{\Gamma}$-equivariant vector field having a periodic solution with the desired symmetries. To find the kinds of periodic solutions in (\ref{nFHN_2d}), we need therefore to determine, up to conjugacy, all isotropy subgroups $K$ having $\mathrm{dim}~\mathrm{Fix}(K)\geq2$ and all subgroups $H$ for which $H/K$ is cyclic. In other words, all nontrivial periodic solutions of system (\ref{nFHN_2d}) correspond to the non trivial zeros in the two-dimensional fixed point space of some isotropy subgroup $\Sigma\subset \mathbb{Z}_{N}\times\mathbb{Z}_{N}\times\mathbb{Z}_{2}\times\mathbb{S}^{1}.$ In the process of identifying these isotropy subgroups, we will first reduce $\mathbf{\Gamma}\times\mathbb{S}^1$ to $\mathbb{Z}_N^{\alpha}\times\mathbb{Z}_N^{\beta}\times\mathbb{S}^1$ by considering a group homomorphism \cite{GS04}; then, we will show that the only isotropy subgroup $\mathbf{\Sigma}$ with $\mathrm{dim}~\mathrm{Fix}(\mathbf{\mathbf{\Sigma}})=2,$ is the kernel of $\mathbb{Z}_N^{\alpha}\times\mathbb{Z}_N^{\beta}\times\mathbb{S}^1.$ But before finding these two-dimensional isotropy subgroups, let us make the following considerations.

Recall first, that we have $\mathbf{\Gamma}=\mathbb{Z}_{N}^{\alpha}\times\mathbb{Z}_{N}^{\beta}\times\mathbb{Z}_{2}.$ If now we call $\mathbf{\Gamma}_1=\mathbb{Z}_{N}^{\alpha}\times\mathbb{Z}_{N}^{\beta},$ then we have $\mathbf{\Gamma}=\mathbf{\Gamma}_1\times\mathbb{Z}_{2}.$ For now, let focus our attention on the direct product group $\mathbf{\Gamma}_1.$ Let $\xi$ be a generator of $\mathbb{Z}_{N}.$ Its elements are written multiplicatively, in the form $\xi^\chi,$ and let $\omega=e^{2\pi i/N}.$ From $\mathbb{Z}_{N}^{\alpha}=\langle\sigma\rangle,$ we have $\sigma^N=1$ and from $\mathbb{Z}_{N}^{\beta}=\langle\rho\rangle,$ we have $\rho^N=1.$ By following the ideas presented in \cite{GS04}, the two-dimensional irreducible representations $\sigma_m$ and $\rho_m$ of $\mathbb{Z}_{N}^{\alpha}$ and $\mathbb{Z}_{N}^{\beta}$ respectively, over $\mathbb{C}$ are $\xi_0,\xi_1,\ldots,\xi_{[N/2]},$ defined as
\begin{equation}\label{ger1}
    \begin{array}{l}
        \xi z=\omega^mz,\hspace{0.7cm} m=1,2,\ldots,\frac{N-1}{2}.
    \end{array}
\end{equation}
$\xi_0$ is the trivial representation on $\mathbb{R}.$ When $N$ is even, $\xi_{N/2}$ is the representation on $\mathbb{R}$ in which both $\sigma$ and $\rho$ act trivially. In all other cases, $\xi_k$ (where $0\leq{k}\leq{N-1}$), is the representation on $\mathbb{R}^2=\mathbb{C}$ in which both $\sigma$ and $\rho$ act as multiplication by $\omega^k=e^{2\pi ik/N}.$

The space $\mathbb{R}^{N^2}=\langle{x}\rangle$ decomposes into $\mathbf{\Gamma}_1$ irreducibles according to $\mathbb{R}^{N^2}=x_0\oplus,\ldots,\oplus x_{[{N^2}/2]},$ where the action of $\mathbf{\Gamma}_1$ on $x_k$ is isomorphic to $\xi_k\times\xi_k$ and the action of $\varpi$ is by $-1.$ Similarly, the space $\mathbb{R}^{N^2}=\langle{y}\rangle$ decomposes into $\mathbf{\Gamma}_1$ irreducibles according to $\mathbb{R}^{N^2}=y_0\oplus,\ldots,\oplus y_{[{N^2}/2]},$ where the action of $\mathbf{\Gamma}_1$ on $y_k$ is isomorphic to $\xi_k\times\xi_k,$ and the action of $\varpi$ is by $-1.$ The $\mathbf{\Gamma}_1-$ irreducible components of $\mathbb{R}^{2N^2}$  are $x_k\oplus{y_k}$ with actions $(\xi_k\times\xi_k)\oplus(\xi_k\times\xi_k).$ Moreover, these are the isotypic components. The action of $\theta$ on each component is by $-1.$ As shown in \cite{GS04}, the action of $\mathbb{S}^1$ can be written in the form $x+iy=e^{i\theta}(x+iy).$ Therefore, $\pi\in\mathbb{S}^1$ also acts by $-1,$ so $(1,\varpi,\pi)\in\mathbf{\Gamma}_1\times\mathbb{Z}_2\times\mathbb{S}^1$ acts trivially.

There is a homomorphism $\mathbf{\Gamma}_1\times\mathbb{Z}_{2}\times\mathbb{S}^{1}\rightarrow\mathbf{\Gamma}_1\times\mathbb{S}^{1}$ defined by
\begin{equation*}\label{homom}
    \begin{array}{l}
        (\eta,1,\theta)\mapsto(\eta,\theta)\\
        (\eta,\varpi,\theta)\mapsto(\eta,\theta +\pi)
    \end{array}
\end{equation*}
and the action factors through this homomorphism, such that finally there is a $\mathbf{\Gamma}_1\times\mathbb{S}^{1}$ action, modulo $K=\langle(1,\varpi,\pi)\rangle.$ As shown in \cite{GS06}, the isotropy subgroups are generated by the isotropy subgroups of the $\mathbf{\Gamma}_1\times\mathbb{S}^{1}$ action together with $K.$

Since we know what the action of $\mathbf{\Gamma}_1$ on $\mathbb{C}$ is, the corresponding action of $\mathbf{\Gamma}_1\times\mathbb{S}^1$ is $(\xi,\xi,\theta)=\omega^{2m}e^{i\theta}z.$ The only way for $\mathbf{\Sigma}$ to have a two-dimensional fixed-point subspace is if $\mathbf{\Sigma}$ is the kernel of the $\mathbf{\Gamma}_1\times\mathbb{S}^1$ action. This kernel consists of the elements $(\xi^\chi,\xi^\chi,\theta)$ such that $\omega^{2m\chi}e^{i\theta}=1;$ that is, $4m\chi\pi/N+\theta=0.$ Hence $\mathbf{\Sigma}=\{(\xi^\chi,\xi^\chi,-4m\chi\pi/N)\}.$

This group is isomorphic to $\mathbf{\Gamma}_1.$ Its spatial part $\mathbf{\Sigma}\cap\mathbf{\Gamma}_1$ consists of those elements for which $m\chi$ is divisible by $N,$ so that $\chi$ is a multiple of $N/d$ where $d=gcd(m,N).$ That is, $\mathbf{\Sigma}\cap\mathbf{\Gamma}_1\cong\mathbf{\Gamma_d}.$ Therefore (\ref{ger1}) represents a discrete rotating wave with spatial symmetry $\mathbf{\Gamma_d}.$ The discrete rotating wave is translated diagonally in the squared lattice i.e. the discrete rotating wave corresponding to the lattice nodes $(\alpha,\alpha)$ with $0\leq{\alpha}\leq{N-\alpha},$ is the same. This diagonal displacement is determined by the simultaneous and orthogonal action of $\sigma$ and $\rho.$ As a consequence, there will exist $N$ different discrete rotating waves per lattice direction $\alpha$ or $\beta;$ the number of total different discrete rotating waves per lattice is still $N,$ because the discrete rotating waves in different rows (columns) of the lattice are just cyclic permutations of the $N-$ waves of one row (column).

Figure 3 shows the time evolution of the variables $x_{\alpha,\beta}$ for $\alpha,\beta={1,2,3},$ for the case $N=3$ of one single torus. It can be seen that for fixed $\alpha,$ $x_{\alpha,\beta}$ are three distinct discrete rotating waves, separated by a constant phase shift of $\phi=\frac{2\pi k}{N},$ as predicted by the above analysis. Moreover, when switching from $\alpha$ to $\alpha+1,$ variable $x_{\alpha+1,\beta}$ is again, phase shifted with $\frac{2\pi k}{N}$ with respect to $x_{\alpha,\beta}.$ In other words, discrete rotating waves are translated 'diagonally' in the lattice, so that we have ${H_{\alpha,\beta}}(t)=\left[{x_{\alpha,\beta}}(t-k\phi),{y_{\alpha,\beta}}(t-k\phi)\right]\cong{H_{\left(\alpha+k,\beta+k\right)\left(\mathrm{mod}\hspace{0.1cm} N\right)}}(t),$ with $0\leq{k}\leq{N-1}.$

\section{Multifrequencies in a network of two coupled tori, each with $\mathbf{\Gamma}_1$ symmetry}\label{twocoupled}
Multifrequencies patterns of oscillations have been initially observed in \cite{AC99}, \cite{GS02}, \cite{BGP00} and \cite{GS06}, where one or more oscillators oscillate at different frequencies. Golubitsky et al. and Stewart et al. developed the group-theoretical works which altogether represent the theoretical tools allowing a systematic analysis of these dynamics in systems with symmetry. Based on this theory, Palacios et al. \cite{PCLR05}, \cite{LPV03} as well as Longhini et al. \cite{LPV07}, developed arguments that explained the experimental observation of these patterns in electrical circuits. We apply here Golubitsky's theory and Palacios's methodology to study the multifrequency patterns in a network obtained by diffusively coupling two tori. Concretely, the
Equation (\ref{nFHN_2d}) becomes

\begin{equation}\label{nFHN_2d_2tori}
    \begin{array}{l}
        \displaystyle{\dot{x}_{{\left(\alpha,\beta\right)}_\psi}=ax_{{\left(\alpha,\beta\right)}_{\psi}}-
        x_{{\left(\alpha,\beta\right)}_{\psi}}^3-y_{{\left(\alpha,\beta\right)}_{\psi}}
        +\gamma \kappa\left(x_{{\left(\alpha+1,\beta\right)}_{\psi}},x_{{\left(\alpha,\beta\right)}_{\psi}}\right)}\\
        \\
        \hspace{1.63cm}\displaystyle{+\delta \mu\left(x_{{\left(\alpha,\beta+1\right)}_{\psi}},x_{{\left(\alpha,\beta\right)}_{\psi}}\right)
        +\varepsilon\frac{1}{N^2}\sum_{\varrho=1}^{N^2}{x_\varrho}_\upsilon}\\
        \\
        \displaystyle{\dot{y}_{{\left(\alpha,\beta\right)}_{\psi}}=
        bx_{{\left(\alpha,\beta\right)}_{\psi}}-cy_{{\left(\alpha,\beta\right)}_{\psi}}}
        \hspace{0.5cm}\psi,\upsilon=1,2\hspace{0.5cm}\psi\neq\upsilon.
    \end{array}
\end{equation}

Equation (\ref{nFHN_2d_2tori}) represents the dynamical system of the two coupled tori; indices $\psi,\upsilon,$ represent the two tori, $\varepsilon$ is the intertori coupling constant, while coupling functions $\kappa(x_{\alpha+1,\beta},x_{\alpha,\beta})$ and $\mu(x_{\alpha,\beta+1},x_{\alpha,\beta})$ are defined as in Equation (\ref{coupling}).

Let us first observe that while each torus has interneuronal nearest-neighbor coupling in each direction, the intertori coupling is such that every neuron in one torus is coupled with a 'mean field' of the $N^2$ neurons of the other torus. Therefore, while the symmetry group acting on each torus is $\mathbb{Z}_{N}^{\alpha}\times\mathbb{Z}_{N}^{\beta}\times\mathbb{Z}_{2},$ the symmetry group of the full system is given by the wreath product $\left(\mathbb{Z}_{N}^{\alpha}\times\mathbb{Z}_{N}^{\beta}\times\mathbb{Z}_{2}\right)^2\wr\mathbb{D}_2.$ A formalism has been developed in \cite{DGS04}, which allows determining all oscillation patterns of the full system \eqref{nFHN_2d_2tori}. The methodology consists in calculating the axial subgroups (up to conjugacy) of the wreath product $\left(\mathbb{Z}_{N}^{\alpha}\times\mathbb{Z}_{N}^{\beta}\times\mathbb{Z}_{2}\right)^2\wr\mathbb{D}_2$ and analyzing all isotropy subgroups and maximal isotropy subgroups. It is a general approach able to theoretically predict the patterns of oscillation of a network formed by the nearest neighbor coupling of an arbitrary number of tori. This makes the object of another paper and is actually our work in progress that it will be published elsewhere shortly. In this Section, however, we do not pretend to carry out such a global analysis. The purpose of this section is to prove, by using a group theoretical argument, the possibility of a pattern of oscillations in which one torus produces discrete rotating waves with a constant phase shift, while the other torus shows in-phase oscillations at $N-$times the frequency of the rotating waves. We observed this multifrequency oscillation pattern by computer simulations of system (\ref{nFHN_2d_2tori}) and would like to theoretically prove the possibility of obtaining it.

The idea is constructing a set of multifrequency patterns of interest and analyzing the action the symmetry group of the network has on the constructed set. It should be emphasized, the constructed set is only one of the possible sets of patterns of oscillations of the network.

Let us first define
\begin{equation*}\label{set}
    \begin{array}{l}
        {H_1}(t)=\left[x_{\alpha,\beta}(t),y_{\alpha,\beta}(t)\right]_{T_1}\\
        {H_2}(t)=\left[x_{\alpha,\beta}(t),y_{\alpha,\beta}(t)\right]_{T_2}
    \end{array}
\end{equation*}
to represent the state of the $x_{\alpha,\beta},y_{\alpha,\beta}$ variables in one tori $\#1$ and $\#2,$ respectively. Then the set $P(t)=[H_1(t),H_2(t)],$ represents an ensemble of the spatio-temporal patterns generated by the network. Computer simulations of system (\ref{nFHN_2d}) corresponding to two coupled tori indicate that a possible set of patterns of oscillations is formed by discrete rotating waves with a constant phase shift in one torus and in-phase oscillations generated by the other torus. Figures \ref{two_tori}(a) and \ref{two_tori}(e), show three distinct traveling waves in two coupled tori of $3\times 3$ neurons (Figure \ref{two_tori}(a)) and $11\times 11$ neurons (Figure \ref{two_tori}(e)). The waves generated by torus $\#1,$ are of the form
\begin{equation*}\label{TW}
    \begin{array}{l}
        {H_1}_{TW}(t)=\left[{x_{\alpha,\beta}}_{TW}\left(t-\left(\alpha+\beta\right)\left(\mathrm{mod}\hspace{0.1cm} N\right)\phi\right),{y_{\alpha,\beta}}_{TW}\left(t-\left(\alpha+\beta\right)\left(\mathrm{mod}\hspace{0.1cm} N\right)\phi\right)\right]
    \end{array}
\end{equation*}
with a constant phase shift $\phi=\frac{2\pi k}{N}=\frac{\tau}{N},$ -where $\tau$ is the period of oscillations- among nearest-neighbor oscillators, the waves being cyclically permuted within the rows (columns) of the squared array. On the other hand, torus $\#2$ generates in-phase oscillations of the form
\begin{equation*}\label{IP}
    \begin{array}{l}
        {H_2}_{IP}(t)=\left[{x_{\alpha,\beta}}_{IP}(t),{y_{\alpha,\beta}}_{IP}(t)\right]
    \end{array}
\end{equation*}
of same period $\tau,$ where all variables oscillate at $N-$ times the frequency of the traveling waves and all $[{x_{\alpha,\beta}}_{IP}(t),{y_{\alpha,\beta}}_{IP}(t)]$ are identical. A sample of the in-phase oscillations in two coupled tori of $3\times 3$ neurons is shown in Figure \ref{two_tori}(b) and for $11\times 11$ neurons, in Figure \ref{two_tori}(f). Therefore in this case, the set of patterns of oscillations ${P_{2T}}(t)=\left[{H_1}_{TW}(t),{H_2}_{IP}(t)\right],$ is formed by traveling waves ($TW$) and in-phase ($IP$) oscillations (where $2T$ stands for the case of two coupled tori).

Now, it appears natural to search the spatial and temporal transformations that leave unchanged the elements of the set of patterns of oscillations. The set of these spatial and temporal transformations form the symmetry group of the oscillation patterns \cite{LPV03}, \cite{LPV07}. As a first step, let's assume that the pattern $P_{2T}=\left[{H_1}_{TW}(t),{H_2}_{IP}(t)\right]$ has the symmetry group $\left(\mathbf{\Gamma}_1\times\mathbb{S}^1\right)\times \left(\mathbf{\Gamma}_1\times\mathbb{S}^1\right),$ which describes (simultaneous) cyclic permutations of the oscillators in each array, accompanied by shifts in time by $\phi.$ That is, $\left(\mathbf{\Gamma}_1\times\mathbb{S}^1\right)\cdot {H_1}_{TW}(t)={H_1}_{TW}(t)$ so the traveling waves are unchanged. On the other hand, $\left(\mathbf{\Gamma}_1\times\mathbb{S}^1\right)\cdot {H_2}_{IP}(t)={H_2}_{IP}(t+\phi),$ so the in-phase oscillators are shifted in time by $\phi.$ Thus if $\left(\mathbf{\Gamma}_1\times\mathbb{S}^1\right)\times \left(\mathbf{\Gamma}_1\times\mathbb{S}^1\right)$ is the symmetry group of $P_{2T}(t),$ then $\left(\mathbf{\Gamma}_1\times\mathbb{S}^1\right)\times \left(\mathbf{\Gamma}_1\times\mathbb{S}^1\right)\cdot P_{2T}(t)=P_{2T}(t),$ implies that ${H_2}_{IP}(t)={H_2}_{IP}(t+\phi).$ There is a simple explanation for the fast oscillations generated by the second torus: the only way to obtain in-phase oscillations in torus $\#2$ once we get traveling waves with constant phase shift in torus $\#1,$ is when in-phase pattern oscillates at $N$ times the frequency of the traveling wave pattern \cite{GS04}.

When $N$ is even, however, a condition for the Hopf bifurcation of the discrete rotating waves, taking into account is not only nearest-neighbor but also next-nearest neighbor couplings \cite{GS86},\cite{LPV03}. As shown in \cite{AC99}, there is a more subtle interpretation for bifurcation of discrete rotating waves when $m$ and $N$ are not coprime.

As it was shown in Section \ref{onetorus_theor} and \cite{AC99}, the spatial part $\mathbf{\Sigma}\cap\mathbf{\Gamma}_1$ consists of those elements for which $m\chi$ is divisible by $N,$ so that $\chi$ is a multiple of $\iota=N/d$ where $d=gcd(m,N).$ Then the ring of oscillators can be divided into $\iota$ subsets of $d$ oscillators, so that in each subset the oscillators behave identically (they oscillate in-phase), but the next subset oscillates with a phase-lag of $T/\iota$ with respect to the first one. We therefore get a discrete rotating wave among sets of oscillators behaving identically.

\section{Concluding Remarks}

We analyzed the oscillatory dynamics of a network of electrically coupled FHN neurons. This network can be related to architectures of electrically coupled neurons observed in anatomical structures of the nervous system. We first described the building-block of the network, which consists of squared arrays shaped in the form of a torus. The analysis of the linear approximation of the system formed by one torus reveals the importance of interneuronal coupling strengths; they act as precursors of the Hopf bifurcation point. We give the analytical conditions for Hopf bifurcation and asymptotic stability of the coupled system, in which each torus is formed by an arbitrary number of neurons. We then performed a group theoretical analysis of one torus and found that only oscillatory pattern is represented by traveling waves with a constant phase shift. When analyzing two coupled tori, we identified a set of patterns of oscillations whose elements are traveling waves and in-phase 'fast' oscillations. We showed that when the symmetry group of a single torus acts on the traveling waves, it leaves them unchanged; by the contrary, when it acts on the in-phase oscillations, they are shifted in time by $\phi.$ This proves that one of the possible patterns of oscillations of the network formed by the two coupled tori is represented by traveling waves produced by one torus and in-phase oscillations at $N-$times the frequency of the traveling waves, shown by the other torus. This result is of possible interest for modeling the electrical activity of the nervous system.

\textbf{Acknowledgments.}
ACM acknowledges support from the BioSim Network, grant number LSHB-CT-2004-005137. Thanks are expressed to Prof. Peter Ashwin for helpful discussions. %The author thanks the referees their useful comments, which improved the presentation of this article.
%% The Appendices part is started with the command \appendix;
%% appendix sections are then done as normal sections
%% \appendix

%% \section{}
%% \label{}

\appendix
\section{Appendix}\label{appendix}

Our aim is applying the criterium established by Guckenheimer and Holmes \cite{guck}, to analyze the stability of the limit cycle of system \eqref{FHN1} obtained by linearization about origin. From equation \eqref{eigvalsFHN1}, the condition for Hopf bifurcation is $a=c<b,~a<1,$ and system \eqref{FHN1} writes

\begin{equation}\label{FHN1hopf}
    \begin{array}{l}
        \dot{x}=ax-x^3-y\\
        \dot{y}=bx-ay.
    \end{array}
\end{equation}

It has been shown in \cite{guck}, \cite{strog}, that any system at Hopf bifurcation can be put in the form

\begin{equation}\label{sist-strog}
    \begin{array}{l}
        \dot{x}=-\varphi y +f\left(x,y\right)\\
        \dot{y}=\varphi x +g\left(x,y\right),
    \end{array}
\end{equation}

by a suitable change of variables. In equation \eqref{sist-strog}, $\varphi$ is a real number, while $f\left(x,y\right)$ and $g\left(x,y\right)$ contain only higher-order nonlinear terms that vanish at origin. Moreover, one can discern between a subcritical and supercritical Hopf bifurcation by calculating the sign of the quantity:

\begin{equation}\label{derpar}
\begin{array}{l}
        \displaystyle{16s^*=f_{xxx}+f_{xyy}+g_{xxy}+g_{yyy}}\\
        \\
        \hspace{1.1cm}\displaystyle{+\frac{1}{\varphi}\left[f_{xy}\left(f_{xx}+f_{yy}\right)-g_{xy}\left(g_{xx}+g_{yy}\right)-f_{xx}g_{xx}+f_{yy}g_{yy}\right]},
\end{array}
\end{equation}

where the subscripts indicate partial derivatives evaluated at $\left(0,0\right).$ If $s^*<0,$ the bifurcation is supercritical, while if $s^*>0$ the bifurcation is subcritical.

In order to put system \eqref{FHN1hopf} in the form \eqref{sist-strog}, we make the transformation $\displaystyle{\tilde{x}=\frac{1}{\sqrt{b-a^2}}y-\frac{a}{\sqrt{b-a^2}}x,~\tilde{y}=x.}$ System \eqref{FHN1hopf} is transformed into

\begin{equation*}\label{FHN1cambiado}
    \begin{array}{l}
        \displaystyle{\dot{\tilde{x}}=-\varphi \tilde{y} +\frac{a}{\sqrt{b-a^2}}\tilde{y}^3}\\
        \displaystyle{\dot{\tilde{y}}= \varphi\tilde{x}-\tilde{y}^3},
    \end{array}
\end{equation*}

where $\displaystyle{\varphi=-\sqrt{b-a^2},~f\left(\tilde{x},\tilde{y}\right)=\frac{a}{\sqrt{b-a^2}}\tilde{y}^3,~
g\left(\tilde{x},\tilde{y}\right)=-\tilde{y}^3}.$ Then, by evaluating expression \eqref{derpar}, we obtain $16s^*=-6$, which gives $s^*<0$ so Hopf bifurcation corresponding to steady state $\left(0,0\right),$ is supercritical.


\begin{thebibliography}{00}

%% \bibitem{label}
%% Text of bibliographic item

%\bibitem{}
\bibitem{H06}{\sc R. B. Hoyle}, {\it Pattern formation: an introduction to methods}, Cambridge Univ. Press, (2006), 116--122.
\bibitem{GS04b}{\sc M. Golubitsky, J. Stewart}, {\it The symmetry perspective: from equilibrium to chaos in phase space and physical space}, Birkhauser, (2004), 91--95.
\bibitem{BBKS00}{\sc G.N. Borisyuk, , R.M. Borisyuk, Y.B.Kazanovich, G. Strong}, {\it Oscillations in neural systems}, Lawrence Erlbaum Associates, Publishers, (2000), 261-284.
\bibitem{BBK98}{\sc R.M. Borisyuk, G.N. Borisyuk, Y.B.Kazanovich}, {\it The synchronization principle in modelling of binding and attention}, Membr. Cell Biol., \textbf{11}, (1998), 753--761.
\bibitem{PCLR05}{\sc A. Palacios, R. Carretero-González, P. Longhini, N. Renz}, {\it Multifrequency synthesis using two coupled nonlinear oscillator arrays}, Phys. Rev. E, \textbf{72}, (2005), 026211--9.
\bibitem{BGP00}{\sc P.L. Buono, M. Golubitsky and A. Palacios}, {\it Heteroclinic cycles in rings of coupled cells}, Physica D, \textbf{143}, (2000), 74--108.
%\bibitem{CRG88}{\sc A. Cohen, S. Rossignol, and S. Grillner ed.}, {\it Neural Control of Rhythmic Movements in Vertebrates}, Wiley, New York, (1988), 333--367.
\bibitem{GSB98}{\sc M. Golubitsky, I. Stewart, P.L. Buono and J.J. Collins}, {\it A modular network for legged locomotion}, Physica D, \textbf{115}, (1998), 56--72.
\bibitem{KE88}{\sc N. Kopell, G.B. Ermentrout}, {\it Coupled oscillators and the design of central pattern generators}, Math. Biosci., \textbf{90}, (1988), 87--109.
\bibitem{KE90}{\sc N. Kopell, G.B. Ermentrout}, {\it Phase transitions and other phenomena in chains of coupled oscillators}, SIAM J. App. Math., \textbf{50}, (1990), 1014--1052.
\bibitem{RTWE98}{\sc J. Rinzel, D. Terman, X.J. Wang, B. Ermentrout}, {\it Propagating activity patterns in large-scale inhibitory neuronal networks}, Science, \textbf{279}, (1998), 1351--1355.
\bibitem{CS94}{\sc J.J. Collins, I. Stewart}, {\it A group-theoretic approach to rings of coupled biological oscillators}, Biol. Cyb., \textbf{71}, (1994), 95--103.

\bibitem{AC99}{\sc D. Armbruster, P. Chossat}, {\it Remarks on multi-frequency oscillations in symmetrically coupled oscillators}, Phys. Lett. A, \textbf{254}, (1999), 269--274.

\bibitem{GS86}{\sc M. Golubitsky, I. Stewart}, {\it Hopf bifurcation with dihedral group symmetry: coupled nonlinear oscillators. In: Multiparameter bifurcation theory}, M. Golubitsky, J. Guckenheimer, eds., Contemporary Mathematics \textbf{56}, AMS (1986), 131--173.
\bibitem{GS04}{\sc M. Golubitsky, J. Stewart}, {\it Singularities and groups in bifurcation theory II}, M. Golubitsky, I. Stewart, D. G. Schaeffer, eds., Applied mathematical sciences \textbf{69}, Springer-Verlag, (1988), 388--399.%69--108.
\bibitem{MDHM00}{\sc J. Miller, W.P. Dayawansa, P. Hallgren, C.F. Martin}, {\it Phase Locking in the mammalian circadian clock}, Proc. IEEE Conf. on Decision and Control,(2000), 1643--1648.
\bibitem{LPV07}{\sc P. Longhini, A. Palacios, V. In, J.D. Neff, A. Kho, A. Bulsara}, {\it Exploiting dynamical symmetry in coupled nonlinear elements for efficient frequency down-conversion}, Phys. Rev. E, \textbf{76}, (2007), 026201--6.
\bibitem{LPV03}{\sc V. In, A. Kho, J.D. Neff, A. Palacios, P. Longhini, B.K. Meadows}, {\it Experimental observation of multifrequency patterns in arrays of coupled nonlinear oscillators}, Phys. Rev. Lett., \textbf{91}, (2003), 244101-4.
\bibitem{SS07}{\sc G.B. Stan, R. Sepulchre}, {\it Analysis of interconnected oscillators by dissipativity theory}, IEEE Trans. Autom. Control, \textbf{52}, (2007), 256--270.
\bibitem{GS02}{\sc M. Golubitsky, J. Stewart}, {\it Geometry, dynamics, and mechanics: 60th Birthday Volume for J. E. Marsden, edited by P. Holmes, P. Newton, and A. Weinstein}, Springer-Verlag, Berlin, (2002), 243--286.
\bibitem{M02}{\sc J.D. Murray}, {\it Mathematical biology, I: an introduction}, Springer-Verlag, (2002), 239--244.
\bibitem{GS06}{\sc M. Golubitsky, J. Stewart}, {\it Nonlinear dynamics of networks: the group formalism}, Bull. AMS, \textbf{43}, (2006), 305--364.
\bibitem{T97}{\sc R. Turcajova}, {\it Numerical condition of discrete wavelet transforms}, SIAM J. Matrix Anal. Appl., \textbf{18}, (1997), 981--999.

\bibitem{Cortex}{\sc A.K. Seth, J.L. McKinstry, G.M. Edelman, J.L. Krichmar}, {\it Visual binding through reentrant connectivity and dynamic synchronization in a brain-based device}, Cer. Cortex, \textbf{14}, (2004), 1185--1199.
\bibitem{DGS04}{\sc B. Dionne, M. Golubitsky, I. Stewart}, {\it Coupled cells with internal symmetry: I. Wreath products}, Nonlinearity, \textbf{9}, (2004), 559--574.
\bibitem{Z}{\sc E.C. Zeeman}, {\it Catastrophe theory in brain modelling}, Int. J. Neuroscience, \textbf{6}, (1973), 39--41.
\bibitem{HH}{\sc A. Hodgkin, A. Huxley}, {\it A quantitative description of membrane current and its application to conduction and excitation in nerve}, J. Physiol., \textbf{117}, (1952), 500--544.
\bibitem{FN1}{\sc J.S. Nagumo, S. Arimoto, S. Yoshizawa}, {\it An active pulse transmission line simulating nerve axon}, Proc. IRE, \textbf{50}, (1962), 2061--2071.
\bibitem{FN2}{\sc R. FitzHugh}, {\it Impulses and physiological states in theoretical models of nerve membrane}, Biophys. J., \textbf{1}, (1961), 445--466.
\bibitem{Phys}{\sc A. Takamatsu, R. Tanaka, H. Yamada, T. Nakagaki, T. Fuji, I. Endo}, {\it Spatiotemporal symmetry in rings of coupled biological oscillators of Physarum Plasmodial slime mold}, Phys. Rev. Lett., \textbf{87}, (2001), 078102--4.
\bibitem{Cat}{\sc M.S. Livingstone,  D.H. Hubel}, {\it Specificity of intrinsic connections in primate primary visual cortex}, J. Neurosci., \textbf{4}, (1984), 2830--2835.
\bibitem{VisGol}{\sc P.C. Bressloff, J.D. Cowan, M. Golubitsky, P.J. Thomas, M.C. Wiener}, {\it Geometric visual hallucinations, Euclidean symmetry, and the functional architecture of striate cortex}, Phil. Trans. Roy. Soc. (Lond.) B, \textbf{356}, (2001), 1--32.
\bibitem{martinerie}{\sc F. Varela, J.P. Lachaux, E. Rodriguez, J. Martinerie}, {\it The brainweb: phase synchronization and large-scale integration}, Nature, \textbf{2}, (2001), 229--239.
\bibitem{coexistence}{\sc A.O. Komendantov, C.C. Canavier}, {\it Electrical coupling between model midbrain dopamine neurons: effects on firing pattern and synchrony}, Nature, \textbf{87}, (2002), 1526--1541.
\bibitem{learning}{\sc N. Kopell, G.B. Ermentrout}, {\it Learning of phase lags in coupled neural oscillators}, Neur. Comp., \textbf{6}, (1994), 225--241.
\bibitem{guck}{\sc J. Guckenheimer, P. Holmes}, {\it Nonlinear oscillations, dynamical systems, and bifurcation of vector fields}, J. Marsden, L. Sirovich, F. John, eds., Applied mathematical sciences \textbf{42}, Springer-Verlag, (1983), 152--156.
\bibitem{strog}{\sc S.H. Strogatz}, {\it Nonlinear dynamics and chaos: with applications to physics, biology, chemistry and engineering}, Westview Press, (1994), 289.
\bibitem{kostova}{\sc T. Kostova, R. Ravindran, M. Schonbek}, {\it FitzHugh-Nagumo revisited: types of bifurcations, periodical forcing}, Int. J. Bif. Chaos, \textbf{14}, (2004), 913--925.

\end{thebibliography}
\end{document}